\documentclass[final, nomarks]{dmtcs-episciences}

\usepackage[utf8]{inputenc}
\usepackage[T1]{fontenc}
\usepackage[english]{babel}
\usepackage[margin=1.1in]{geometry}

\usepackage{adjustbox}
\usepackage{amsmath}
\usepackage{amssymb}
\usepackage{amsthm}
\usepackage{comment}
\usepackage{enumitem}
\usepackage{epstopdf}
\usepackage{floatpag}
\floatpagestyle{empty}
\usepackage{graphicx}
\usepackage{subfigure}
\usepackage{tikz}
\usetikzlibrary{arrows}
\usepackage{xspace}
\usepackage{xurl}

\usepackage{hyperref}

\usepackage{coge-dmtcs}
\usepackage[round]{natbib}

\newcommand\disjunion{\mathbin{\dot\cup}}

\newlist{seclist}{enumerate}{1}
\setlist[seclist,1]{label=\S\arabic*,
                   ref=\S\arabic*,
                   leftmargin=*}

\title{Orientation of good covers}
\author{P\'eter \'Agoston\thanks{Supported by the Ministry of Innovation and Technology NRDI Office within the framework of the Artificial Intelligence National Laboratory (RRF-2.3.1-21-2022-00004), by the Lend\"ulet program of the Hungarian Academy of Sciences (MTA), under the grant LP2017-19/2017 and by the Thematic Excellence Program TKP2021-NKTA-62 of the National Research, Development and Innovation Office.}\and G\'abor Dam\'asdi\thanks{Supported by the \'{U}NKP-21-3 New National Excellence Program of the Ministry for Innovation and Technology from the source of the National Research, Development and Innovation fund, by the ERC Advanced Grant GeoScape, by the Lendület program of the Hungarian Academy of Sciences (MTA), under the grant LP2017-19/2017 and by the Thematic Excellence Program TKP2021-NKTA-62 of the National Research, Development and Innovation Office}\and Bal\'azs Keszegh\thanks{Supported by the National Research, Development and Innovation Office -- NKFIH under the grant K 132696 and FK 132060, by the \'UNKP-21-5, \'UNKP-22-5 and \'UNKP-23-5 New National Excellence Program of the Ministry for Innovation and Technology from the source of the National Research, Development and Innovation Fund, by the J\'anos Bolyai Research Scholarship of the Hungarian Academy of Sciences, by the ERC Advanced Grant ``ERMiD'', by the EXCELLENCE-24 project no.~151504 of the NRDI Fund, by the Lendület program of the Hungarian Academy of Sciences (MTA), under the grant LP2017-19/2017 and by the Thematic Excellence Program TKP2021-NKTA-62 of the National Research, Development and Innovation Office.}\and D\"om\"ot\"or P\'alv\"olgyi\thanks{Supported by the ÚNKP-22-5 and ÚNKP-23-5 New National Excellence Program of the Ministry for Innovation and Technology from the source of the National Research, Development and Innovation Fund, by the Lendület program of the Hungarian Academy of Sciences (MTA), under the grant LP2017-19/2017, by the Thematic Excellence Program TKP2021-NKTA-62 of the National Research, Development and Innovation Office and by the János Bolyai Research Scholarship of the Hungarian Academy of Sciences, by the ERC Advanced
Grant “ERMiD” and by the EXCELLENCE-24 project no. 151504 of the NRDI Fund.}}
\affiliation{ELTE E\"{o}tv\"{o}s Lor\'{a}nd University, Budapest, Hungary\\
HUN-REN Alfr\'{e}d R\'{e}nyi Institute of Mathematics, Budapest, Hungary}

\keywords{combinatorial geometry, convex sets, good covers, order types, orientations}

\begin{document}
\publicationdata{vol. 27:3}{2025}{2}{10.46298/dmtcs.15019}{2025-01-03; 2025-01-03; 2025-06-24}{2025-06-30}

\maketitle

\begin{abstract}	
	We study systems of orientations on triples (that is, an assignment \begin{math}\o\end{math} of a value from \begin{math}\{+1,-1,0\}\end{math} to each ordered triple) that satisfy the following so-called interiority condition: \begin{math}\operatorname{\circlearrowleft}(ABD)=\operatorname{\circlearrowleft}(BCD)=\operatorname{\circlearrowleft}(CAD)=1\end{math} implies \begin{math}\operatorname{\circlearrowleft}(ABC)=1\end{math} for any \begin{math}A, B, C, D\end{math}.
    We call such an orientation a partial 3-order, a natural generalization of a poset that has several interesting special cases.
    As an example, the well-known order type of a planar point set (that can have collinear triples) is a partial 3-order. 
    
    In our previous paper ``Orientation of convex sets'' we defined a partial 3-order on pairwise intersecting convex sets that we call 3-orders realizable by convex sets. A good cover is a family of compact closed sets in the plane such that the intersection of the members of any subfamily is either contractible or empty. In this paper, we extend the partial 3-order from the previous one and define a partial 3-order on good covers  having pairwise intersecting sets.
    
   If the family is non-degenerate with respect to the orientation, i.e., always $\o(ABC)\ne 0$, we obtain a total 3-order.  The main result of this paper is that there is a total 3-order, which is realizable by points that is not realizable by good covers, implying also that it is not realizable by convex sets. This latter problem was left open in our earlier paper.
    Our proof involves several combinatorial and geometric observations that can be of independent interest.
    Along the way, we regard the 3-orders realizable by various special good covers, in particular, by families of topological trees, curves, lines and Y-shapes that pairwise intersect exactly once.
\end{abstract}  

\section{Introduction} 
Given some base set, a mapping $\o$ from its ordered triples to \begin{math}\{+1,-1,0\}\end{math} is a \emph{partial orientation} if
\begin{displaymath}\o(A,B,C)=\o(C,A,B)=\o(B,C,A)=-\o(A,C,B)=-\o(B,A,C)=-\o(C,B,A)\text{ for every } A, B, C.\end{displaymath}
 In the special case, when $\o$ nowhere vanishes, i.e., it is never $0$, we call such an assignment a \emph{total orientation}. We will also use the term \emph{orientation of the ordered $3$-tuple \begin{math}(A, B, C)\end{math}} for the value \begin{math}\o (A, B, C)\end{math}, which we will oftentimes only denote by $\o(ABC)$.
An orientation satisfies the \emph{interiority condition} if \begin{displaymath}\o(ABD)=\o(BCD)=\o(CAD)=1\text{ implies }\o(ABC)=1\text{ for every }A, B, C, D.\end{displaymath}
Let $conv(ABC)$ denote the set of those $D$-s for which \begin{math}\o(ABD)=\o(BCD)=\o(CAD)=1\end{math} or \begin{math}\o(ABD)=\o(BCD)=\o(CAD)=-1\end{math}. See Figure \ref{fig:convexdef}(a).
Note that in the definition of \begin{math}conv(ABC)\end{math} the order of \begin{math}A,B,C\end{math} is not relevant and that for the standard orientation of points in general position it coincides with the standard notion of \begin{math}D\in conv(ABC)\end{math}. However, in general, it does not necessarily satisfy all natural properties of convexity.

\begin{figure}[t]
	\centering
	\includegraphics[width=12cm]{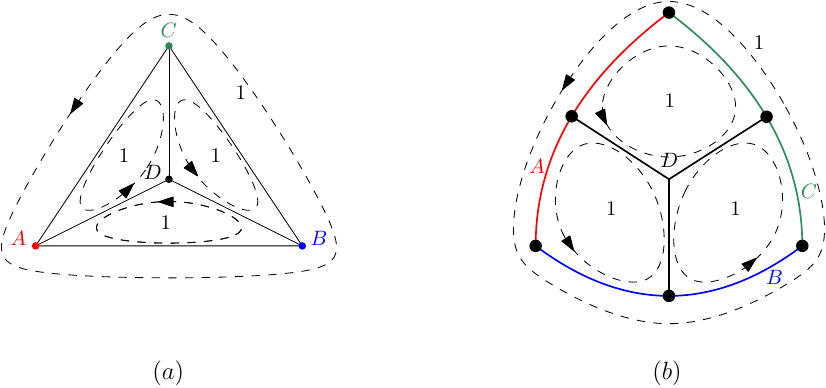}
	\caption{The interiority condition is true for points (a) and also for pairwise intersecting sets that form a good cover (b); here \begin{math}\o(ABD)=\o(BCD)=\o(CAD)=\o(ABC)\end{math}, or equivalently, we can write \begin{math}D\in conv(ABC)\end{math}.}
	\label{fig:convexdef}
\end{figure}

A partial orientation that satisfies the interiority condition is called in the literature an \emph{order of rank 3}, we will refer to it as a \emph{partial 3-order} or \PO. A total orientation that satisfies the interiority condition is called a \emph{total 3-order} or \TO. Orders of higher rank can also be defined, we refer to \cite{C-3TO} for a more detailed introduction to this topic. In the remainder when we talk about some order it is always a 3-order, although some of the following definitions and statements hold for orders of other ranks.

The notion of a total 3-order was introduced by \cite{Knuth} under the name \emph{interior triple system}, according to Knuth ``for want of a better name.'' He noted that taking the orientations (in the well-known geometric sense) of all triples of a planar point set in general position we get a total 3-order, while if we allow collinearity, we get a partial 3-order. More recently, \cite{BFSchSchS} have also studied total 3-orders. Unaware of Knuth's work, they named them \emph{generalized signotopes}, and studied primarily a special subclass of them that can be defined from topological drawings of $K_n$. They studied the appropriately defined versions of well-known results and proved, for example, that Kirchberger's theorem holds for total 3-orders, but Helly's theorem does not. They also rediscovered Knuth's construction that there are \begin{math}2^{\Omega(n^3)}\end{math} different \TO's, and using recursion they obtained a slightly better constant and also an upper bound, proving that the number of \TO's is between \begin{math}2^{0.25\binom n3+o(n^3)}\end{math} and \begin{math}2^{0.84\binom n3+o(n^3)}\end{math}.

If there is a bijection between the members of two families of objects such that each ordered $3$-tuple has the same orientation, then we say that they have the same order type. In general, once an order has been fixed (or is implicitly understood) on a (typically finite) family $\mathcal F$, then this order is called the \emph{order type} of $\mathcal F$ so the order type of a family of objects for us is just the underlying partial order. We also say that the order is \emph{realized} by $\mathcal F$. Traditionally, this term is used for point sets (see \cite{GP93}); we use it in a more general sense, for any family (with a fixed order).
Note that in the literature order type is often only used for points in general position; we also allow the sign to be $0$, as in the more restrictive case we can emphasize that the order type is a total order.

We define certain classes of partial and total orders that are order types of specific families (in other words, realizable by specific families).
First, we say that a \TO (resp.\ \PO) that has a realization by a planar set of points is a \pTO (resp.\ \pPO).
We denote the family of all \PO's by \cPO and, similarly, for its subfamilies, we use the calligraphic \cTO, \cpPO, \cpTO, respectively.

In a companion paper \cite{C-3TO}, motivated by a lemma of \cite{JKLPT} (see also \cite{LT}), we have defined an orientation on pairwise intersecting planar closed convex sets, as follows.
If \begin{math}A\cap B\cap C\ne \emptyset\end{math}, then $\o(ABC)=0$.
Otherwise, by \cite{JKLPT}, \begin{math}\R^2\setminus (A\cup B\cup C)\end{math} has one bounded component, and its boundary has exactly one arc from each of the boundaries of $A$, $B$ and $C$.
We defined \begin{math}\o(ABC)=1\end{math} if in cyclic counterclockwise order these arcs belong to \begin{math}A,B,C\end{math}, and defined \begin{math}\o(ABC)=-1\end{math} in the remaining cases. We showed that $\o$ satisfies the interiority condition, i.e., it is a 3-order.
Denote the subfamily of \cTO and \cPO that have a realization by pairwise intersecting planar convex sets by \cCTO and \cCPO, respectively. We refer to them as total/partial 3-orders realizable by convex sets (without explicitly adding that the convex sets are pairwise intersecting).
In particular, if no three sets from a pairwise intersecting convex family have a common point (called a \emph{holey family} in \cite{C-3TO}), the orientation $\o$ gives a \CTO on them.

In this paper, we extend the orientation $\o$ to good covers having pairwise intersecting sets, and denote the respective families by \cGCTO and \cGCPO. We refer to them as total/partial 3-orders realizable by good covers (without explicitly adding that the sets are pairwise intersecting). We call a family of non-empty closed sets in the Euclidean plane a \emph{good cover} if the intersection of any subfamily is contractible or empty, see \cite{Weil}\footnote{We note that in the literature the underlying space can vary and also closedness is not assumed or openness is assumed instead. Therefore, from now on whenever we refer to a planar set it is always assumed that it is a non-empty closed subset of the Euclidean plane even if not said explicitly.}. For example, any family of (closed) convex sets is a good cover. Another example of good covers is any family of curves that pairwise intersect in at most one point, which can either be a crossing point or a tangency.
Note that each set in a good cover needs to be connected as it is contractible.

As convex sets are always good covers, we have $\cCPO\subset \cGCPO\subset \cPO$.
Both containments are strict:
$\cGCPO\ne \cPO$ follows from Theorem \ref{thm:main}, while
$\cCPO\ne\cGCPO$ holds as the order type of a certain five-point configuration can be realized with good covers whose \pPO was shown not be a \CPO in \cite{C-3TO}; see Figure \ref{fig:squaremid}.

\begin{figure}[t]
	\centering
	\includegraphics[width=12cm]{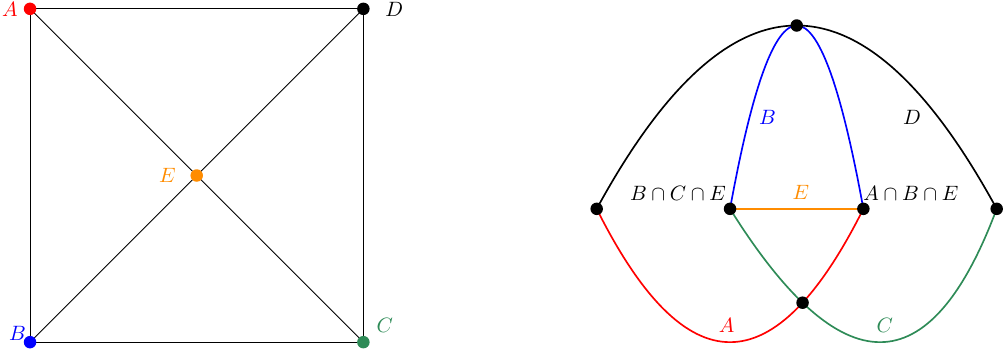}
	\caption{An order type realizable by points, realized on the left by the four vertices of a square (\begin{math}A,B,C,D\end{math}) and their center ($E$). It was shown in \cite{C-3TO} that this order type is not realizable by convex sets. However the right figure is a realization by a good cover.}
	\label{fig:squaremid}
\end{figure}

We have shown in \cite{C-3TO} that there exits a total 3-order realizable by  convex sets which is not realizable by points and vice versa, i.e., \begin{math}\cCTO \nsubseteq \cpTO\end{math} and \begin{math}\cpPO \nsubseteq \cCPO\end{math}.
The first statement \begin{math}\cCTO \nsubseteq \cpTO\end{math} implies also \begin{math}\cCPO \nsubseteq \cpPO\end{math} and thus \begin{math}\cGCPO \nsubseteq \cpPO\end{math}, but this latter statement also easily follows from our \ref{item:5points} of Proposition \ref{prop:tr} and the example given in \ref{fig:toptreeS3}(c).

In this paper we establish the strengthening that there is a 3-order that is realizable by points in general position which is not realizable by convex sets, i.e., $\cpTO \nsubseteq \cCTO$.
This follows from the following more general result.

\begin{thm}\label{thm:main}
    $\cpTO \nsubseteq \cGCTO$, i.e., there exists a total 3-order realizable by points which is not realizable by good covers.
\end{thm}

Our proof will first establish that the 3-order of some point set is not realizable by some special subfamily of (pairwise intersecting) good covers, and then gradually increase the complexity of this subfamily, while also making our point set larger, until we establish the theorem. One important subfamily we will introduce is the family of topological trees for which we prove that the total 3-orders realizable by topological trees that pairwise intersect exactly once are the same as the ones realizable by pairwise intersecting good covers. One key difference between $\pTO$  and $\GCTO$ that we can exploit is that in the former case we can easily add another copy of an element, while in the latter case we cannot; see also \cite{C-3TO}, Theorem~6 and Problem~14.

\smallskip
The rest of the paper is organized as follows. In Section \ref{sec:gc} we define an orientation $\o$ on good covers, and show that in a \GCTO realization we can assume that the sets are pairwise once intersecting topological trees, which also helps to prove that $\o$ satisfies the interiority condition, thus a 3-order. In Section \ref{sec:subfamilies} we define the subfamilies of \cTO that are of interest to us and depict their relations to each other in Figure \ref{fig:3TOposet}. In Section \ref{sec:T} we determine which \pTO's are realizable by T-shapes. In Section \ref{sec:tangencies} we establish an auxiliary lemma on the tangency graph of topological trees. In Section \ref{sec:main} we present the proof of Theorem \ref{thm:main}. Finally, in Section \ref{sec:discussion} we pose some open problems.

\begin{figure}[t]
	\centering
	\includegraphics[width=12cm]{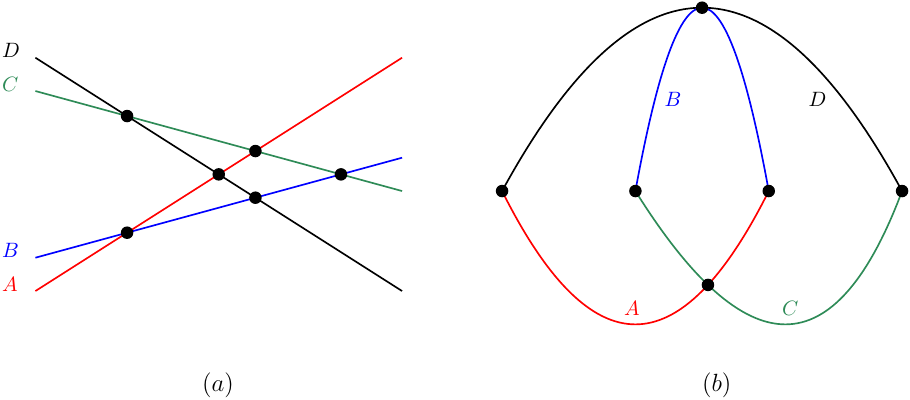}
	\caption{Two good covers that consist of pairwise intersecting topological trees.
		The four sets depicted in each figure are such that no point is contained in three sets, so their orientation is a \GCTO, and no three sets satisfy the premise of the interiority condition:	\begin{math}\o(BCD)=\o(CDA)=\o(DAB)\end{math}.
		Note that during the closed walk \begin{math}A\cp B\end{math}--\begin{math}B\cp C\end{math}--\begin{math}C\cp D\end{math}--\begin{math}D\cp A\end{math}--\begin{math}A\cp B\end{math} in (a) we wind around once, while in (b) we wind around twice.}
	\label{fig:doublecircle}
\end{figure}

\section{Good covers and topological trees}\label{sec:gc}

Given three pairwise intersecting sets from a good cover, $A$, $B$, $C$, define \begin{math}\o(A,B,C)\end{math} as follows.
If \begin{math}A\cap B\cap C\ne\emptyset\end{math}, then \begin{math}\o(A,B,C)=0\end{math}.
Otherwise, a version of the Nerve Theorem that is about good covers \cite[Theorem~13.4]{BT} states that \begin{math}A\cup B\cup C\end{math} is homotopy equivalent to a triangle and then by the Jordan Curve Theorem it follows that \begin{math}A\cup B\cup C\end{math} is a closed set whose complement is the disjoint union of two connected open sets, one bounded and one unbounded. Let $o$ be an arbitrary point of the bounded set. Assume that there are three points \begin{math}z\in A\cap B\end{math}, \begin{math}x\in B\cap C\end{math} and \begin{math}y\in C\cap A\end{math} and directed paths \begin{math}\gamma_A\subseteq A\end{math} connecting $y$ with $z$, \begin{math}\gamma_B\subseteq B\end{math} connecting $z$ with $x$ and  \begin{math}\gamma_C\subseteq C\end{math} connecting $x$ with $y$ that together form the directed closed curve $\gamma$. Let \begin{math}\o(A,B,C)\end{math} be the winding number\footnote{Informally, the winding number is the number of times that the curve goes counterclockwise around the point, it is negative if the curve goes around the point clockwise.} of $\gamma$ around $o$ \footnote{Note that for convex sets this definition coincides with the one from \cite{C-3TO}.}.
We will show that the above orientation $\o$ is well-defined (see Claim \ref{claim:welldef}) and satisfies the interiority condition (see Corollary \ref{cor:gcispo}), thus it is a partial 3-order.
For an example that satisfies the premise of the interiority condition, see Figure \ref{fig:convexdef}(b), while no three of the four sets in Figures \ref{fig:doublecircle}(a) and (b) satisfy the premise.

First we fix some notations. When we consider a realization of a 3-order, we make no distinction between an element and the set representing it.
For \begin{math}n>3\end{math}, the restriction of the orientation $\o$ to some elements \begin{math}\cX=\{X_1,\dots,X_n\}\end{math} from a family (point sets, good covers, etc.), is denoted by \begin{math}\o(X_1,\dots,X_n)\end{math} or simply \begin{math}\o(\cX)\end{math}.

\begin{claim}\label{claim:welldef}
    The orientation $\o$ is well-defined. 
\end{claim}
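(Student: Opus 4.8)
We must show that $\o(A,B,C)$ does not depend on the choice of the Jordan curve $\gamma=\gamma_A\cdot\gamma_B\cdot\gamma_C$ witnessing that $A\cap B\cap C=\emptyset$ (the case $A\cap B\cap C\ne\emptyset$ is vacuous, since then $\o=0$ by fiat). So suppose $\gamma$ and $\gamma'$ are two such Jordan curves, with arcs $\gamma_A,\gamma_B,\gamma_C$ and $\gamma'_A,\gamma'_B,\gamma'_C$ respectively; we want to show that $\gamma_A,\gamma_B,\gamma_C$ appear in counterclockwise order around $\gamma$ if and only if $\gamma'_A,\gamma'_B,\gamma'_C$ appear in counterclockwise order around $\gamma'$.

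**Plan: morph one curve into the other inside $A\cup B\cup C$.** The figure caption (Figure \ref{fig:gcowelldefined}) already signals the intended argument: continuously deform $\gamma'$ to $\gamma$ through a family of Jordan curves, each of which stays inside $A\cup B\cup C$ and each of which is a concatenation of an arc in $A$, an arc in $B$, and an arc in $C$ in the same cyclic labeling order. Since the cyclic order $(A,B,C)$ versus $(A,C,B)$ — equivalently the sign of $\o$ — is a discrete (two-valued) invariant that varies continuously along such a deformation, it must be constant, which is exactly what we want. Concretely I would proceed in three morphing stages, matching the three intermediate curves (green, yellow) in the figure:

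\begin{itemize}
\item[(i)] First, slide the two breakpoints of $\gamma'$ that lie on $A\cap B$ and on $C\cap A$ — each of which is a connected set, actually contractible, since the family is a good cover — so that the portion of the curve labeled $\gamma'_A$ gets replaced by an arc that runs through the contractible set $A$ and has its endpoints moved; using contractibility of $A$, of $A\cap B$, and of $C\cap A$, we can homotope this arc (rel nothing, moving endpoints within $A\cap B$ and $C\cap A$ respectively) to coincide with $\gamma_A$, keeping the whole curve a Jordan curve with the arc staying inside $A$. This is the first step in the figure (blue $\to$ green).
\item[(ii)] Repeat with the arc labeled $B$: now both endpoints of the $B$-arc already lie where we want them (on $A\cap B$ and $B\cap C$), and we homotope the arc through $B$ to $\gamma_B$, using contractibility of $B$. (green $\to$ yellow)
\item[(iii)] The $C$-arc is now forced: its endpoints are pinned at the common points with $\gamma_A$ and $\gamma_B$, and it must lie in $C$; homotope it through $C$ to $\gamma_C$. (yellow $\to$ red).
\end{itemize}

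Throughout, the key technical point is that at every stage the curve remains a \emph{simple} closed curve — it does not develop self-intersections — and remains a concatenation of one arc from each of $A,B,C$. For the first property one argues that because $A\cup B\cup C$ deformation-retracts appropriately (the bounded complementary region, by the Jobson–Kostochka–Lehel–Péter–Tuza-type lemma / good cover structure, behaves like a disk's boundary), a generic homotopy can be taken to avoid self-crossings; where self-crossings would arise they can be removed by a local push-off, exactly as in the figure. For the second property, we only ever move an arc within the single set it is labeled by, and only move endpoints within the relevant pairwise (contractible, hence connected) intersection, so the labeled-arc structure is preserved. Since a Jordan curve's induced cyclic order of three marked sub-arcs changes sign only when the curve passes through a degenerate (non-simple) configuration, and we have avoided all such configurations, $\o$ computed from $\gamma'$ equals $\o$ computed from $\gamma$.

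**Main obstacle.** The delicate part is making the "morph while staying simple and staying inside $A\cup B\cup C$" rigorous: one needs that the space of Jordan curves of the required form is connected, which ultimately rests on the good-cover hypothesis (contractibility of all the intersections involved) together with the structural fact — to be used as a black box here, and proved carefully in Section \ref{sec:gc} — that when $A\cap B\cap C=\emptyset$ the union $A\cup B\cup C$ has a single bounded complementary face whose boundary is a Jordan curve meeting each of $\partial A,\partial B,\partial C$ in a single arc. Granting that, the homotopies in (i)–(iii) are routine applications of contractibility, and the sign of $\o$ is manifestly preserved, proving the claim.
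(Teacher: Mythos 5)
Your overall strategy -- continuously morph $\gamma'$ into $\gamma$ through Jordan curves that remain concatenations of an $A$-arc, a $B$-arc and a $C$-arc, and note that the cyclic order of the three arcs is a discrete invariant of such a deformation -- is exactly the paper's strategy. The gap is that you leave unproved precisely the step that constitutes the proof: why such a deformation exists at all. Contractibility of $A$ lets you homotope the $A$-arc within $A$, but it does not let you do so while avoiding the \emph{fixed} arcs $\gamma'_B$ and $\gamma'_C$: these arcs can themselves run through $A$ (through $A\cap B$ and $A\cap C$, and $\gamma'_A$ may wander into components of $A\setminus C$ far from where $\gamma_A$ lives), so a homotopy inside $A$ may be forced to cross them, destroying simplicity. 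Saying that ``a generic homotopy can be taken to avoid self-crossings'' and that crossings ``can be removed by a local push-off'' asserts the conclusion rather than proving it -- genericity and local push-offs cannot remove crossings that are topologically forced. The paper's proof exists exactly to rule such forced crossings out: it isolates the component $A^-$ of $A\setminus(B\cup C)$ adjacent to both $A\cap B$ and $A\cap C$ and the component $A'$ of $A\setminus C$ containing it, first retracts the stray pieces of $\gamma'_A$ lying in the other components of $A\setminus C$ (into which $\gamma'_B,\gamma'_C$ cannot enter) back into $A\cap C$, then moves the subdivision points, uses a lens argument to push $\gamma'_A$ into $A'$ without touching $\gamma'_B,\gamma'_C$, and only then exploits that $A',B',C'$ are disjoint apart from their boundaries. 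None of this structure, or a substitute for it, appears in your outline; ``routine applications of contractibility'' do not supply it.

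A second problem is the black box you invoke, namely that $\R^2\setminus(A\cup B\cup C)$ has a single bounded component whose boundary meets each of $A,B,C$ in one arc. At this point of the paper that fact is only available for convex sets (via \cite{JKLPT}); Section \ref{sec:gc} proves an analogue only for topological trees satisfying \ref{S1} and \ref{S2} (statement \ref{S3}), and the reduction from good covers to such trees (Lemma \ref{lem:gc2tree}) itself presupposes that $\o$ is well-defined on the good cover -- the very claim you are proving -- so leaning on it here is circular, and no independent proof for general good covers is offered. (Notably, the paper's own proof of the claim never needs this fact.) Even granting it, it does not by itself yield connectedness of the space of witness curves; the endpoint-sliding and arc-straightening still have to be justified along the lines above.
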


This simple topological claim is probably already known, but we outline a proof for the sake of completeness.

        \begin{figure}[t]
	\centering
	\includegraphics[width=8cm]{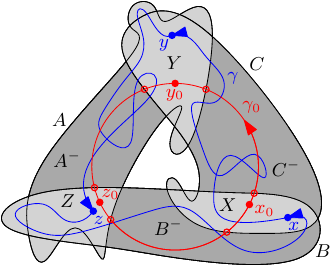}
	\caption{The sets \begin{math}(A^-,X,B^-,Y,C^-,Z)\end{math} and the canonical curve $\gamma_0$ in the proof of Claim \ref{claim:welldef}.}
	\label{fig:newtopproof}
\end{figure}

\begin{proof}
    Suppose that we are given three pairwise intersecting sets \begin{math}A,B,C\end{math} with \begin{math}A\cap B\cap C=\emptyset\end{math}. Define $A^-$ to be the unique connected component of \begin{math}A\setminus (B\cup C)\end{math} which has a common boundary with both \begin{math}A\cap C\end{math} and \begin{math}A\cap B\end{math} (such a component exists). Define $B^-$ and $C^-$ similarly. Now let $X$ be the union of $B\cap C$ and those connected components of $A\setminus C$ that do not contain $A^-$ and those connected components of $C\setminus A$ that do not contain $C^-$. It is easy to see that, as it is a good cover, $X$ must be simply connected. We define $Y$ and $Z$ similarly. See Figure \ref{fig:newtopproof}. 
    We now concentrate on the above defined $6$ simply connected sets in the following cyclic order: \begin{math}(A^-,Z,B^-,X,C^-,Y)\end{math}. They are pairwise disjoint, while the boundaries of any two consecutive sets intersect in a connected set. Further, while the boundaries of \begin{math}A^-,B^-,C^-\end{math} might share points, the boundaries of \begin{math}X,Y,Z\end{math} are pairwise disjoint.\footnote{We note that from this structure it also follows that the complement of  \begin{math}A\cap B\cap C=\emptyset\end{math} is the union of a bounded and an unbounded component, which we showed earlier using the Nerve Theorem.} Thus there exists a canonical closed directed curve $\gamma_0$ that is formed by $6$ directed paths, each lying inside \begin{math}A^-,Z,B^-,X,C^-,Y\end{math}, in this order (the paths corresponding to \begin{math}X,Y,Z\end{math} might be only single points). We can easily split $\gamma_0$ into $3$ paths, $\gamma_{0,A},\gamma_{0,B},\gamma_{0,C}$ each lying in \begin{math}A,B,C\end{math}, respectively, in this order, with splitting points \begin{math}x_0\in X, y_0\in Y, z_0\in Z\end{math}.
               
    Recall that in the definition of the orientation $\gamma$ is as follows: \begin{math}z\in A\cap B\end{math}, \begin{math}x\in B\cap C\end{math} and \begin{math}y\in C\cap A\end{math} and directed paths \begin{math}\gamma_A\subseteq A\end{math} connecting $y$ with $z$, \begin{math}\gamma_B\subseteq B\end{math} connecting $z$ with $x$ and  \begin{math}\gamma_C\subseteq C\end{math} connecting $x$ with $y$ together form the directed closed curve $\gamma$. As $A$ is simply connected, every path inside it is homotopy equivalent, therefore there is a homotopy between $\gamma_A$ and $\gamma_{0,A}$ inside $A$. Similarly we get a homotopy between $\gamma_B$ and $\gamma_{0,B}$ inside $B$ and between $\gamma_C$ and $\gamma_{0,C}$ inside $C$. These together define a homotopy between $\gamma$ and $\gamma_0$ inside \begin{math}A\cup B\cup C\end{math}, which thus preserves the winding number around $o$. Thus indeed the winding number of $\gamma$ is always the same irrespective of our choice of points and curves. Finally, the winding number of $\gamma_0$ is either $+1$ or $-1$. Indeed, $\gamma_0$ is a Jordan Curve (no self-intersections) so it cannot have winding number more than $2$ or less than $-2$. Also, it has a homotopy to \begin{math}A\cup B\cup C\end{math} inside \begin{math}A\cup B\cup C\end{math} (which surrounds $o$), so its winding number cannot be $0$.
\end{proof}

We define a special subfamily of good covers, where each set is a topological tree.

A \emph{topological tree} is an injective embedding of a (graph theoretic) tree, that has no degree two vertices, into the plane, such that vertices are mapped to points, and edges are mapped to simple curves. The images of the degree one vertices are called \emph{leaves}, while the images of the vertices with degree at least three are called \emph{branching points}.

A family of topological trees forms a good cover if every pair of trees intersects at most once.
(This is not an if and only if condition, but it will be more convenient for us to work only with such families.)
For two trees $A$ and $B$ we denote their intersection point by \begin{math}A\cp B\end{math}, i.e., \begin{math}A\cap B=\{A\cp B\}\end{math}.\footnote{The difference is that there is less space around the new math operator. The similarity can lead to no confusion, as these two operators denote practically the same thing.}
For three trees, $A$, $B$ and $C$, we have \begin{math}\o(A,B,C)= 0\end{math} if and only if their pairwise intersection points coincide.

\begin{figure}[t]
	\centering
	\includegraphics[width=6cm]{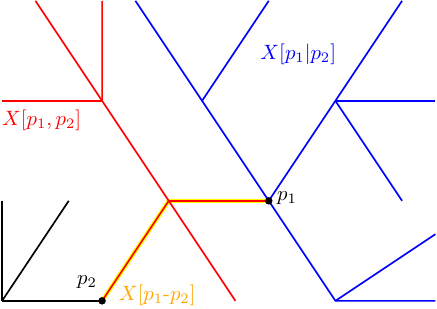}
	\caption{Parts of the topological tree $X$.}
	\label{fig:xdef}
\end{figure}

We need to introduce some notation. See Figure \ref{fig:xdef} for illustration of the next definition.

\begin{defi}
Suppose that $X$ is a topological tree and the point $p_i$ is in $X$ for each \begin{math}1\le i\le k\end{math}.

Define \begin{math}X[p_1\mbox{-}\dots\mbox{-}p_k]\end{math} to be the minimal connected subset of $X$ which contains $p_i$ for every $i$.
In particular, \begin{math}X[p_1\mbox{-}p_2]\end{math} is the path connecting $p_1$ and $p_2$ in $X$.
Note that \begin{math}X[p_1\mbox{-}\dots\mbox{-} p_k]=\cup_{i,j} X[p_i\mbox{-} p_j]\end{math}.

Define \begin{math}X[p_1,\dots, p_k]\end{math} to be the minimal connected subset $X'$ of $X$ which contains $p_i$ for every $i$, and for which every connected component of $X\setminus X'$ has $p_i$ on its boundary for some $i$.

If \begin{math}A_1,\dots A_k\end{math} are topological trees that intersect $X$ once and \begin{math}p_i=X\cp A_i\end{math}, then for brevity we can replace $p_i$ in the above notations with $A_i$.
For example, \begin{math}X[A_1\mbox{-}A_2]=X[p_1\mbox{-}A_2]=X[p_1\mbox{-}p_2]\end{math}.
\end{defi}

Now we are ready to prove our main structural tool. 

\begin{prop}\label{prop:tr}
    Assume that in a planar family:
\end{prop}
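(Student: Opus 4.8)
The plan is to replace every good cover of the family by a topological tree drawn inside it that remembers all of its pairwise intersections, to check that this changes no triple orientation, and then to deduce the interiority condition from the combinatorics of trees (which is what the notation $X[\cdot]$ just introduced is for). Inducting on the number of sets does not obviously help -- adding one set while preserving all orientations already looks as hard as the whole statement -- so I would argue directly.

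\emph{Anchor points.} For a set $S_i$ of the family and $j\ne i$, the trace $S_i\cap S_j$ is contractible, hence connected, and since no point lies in three sets, the traces $S_i\cap S_j$ ($j\ne i$) are pairwise disjoint. As $S_i$ is connected while the nonempty closed traces it contains are pairwise disjoint, they cannot cover $S_i$, so $S_i\setminus\bigcup_{j\ne i}S_j\ne\emptyset$. For every pair pick a point $p_{ij}=p_{ji}\in S_i\cap S_j$; the $\binom n2$ points $p_{ij}$ are pairwise distinct, again because no point lies in three sets.

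\emph{Normalizing the traces -- the main obstacle.} An arc inside $S_i$ joining two of the points $p_{ij}$, $p_{ik}$ may be forced to cross another trace $S_i\cap S_\ell$, because a single trace can separate $S_i$: think of a vertical band crossing a horizontal rectangle, or of three parallel bands crossing a rectangle, where no component of the rectangle minus the bands is incident to all three bands. To get around this I would first bring the family into a normal form in which, for every $i$, the region $S_i\setminus\bigcup_{j\ne i}S_j$ is connected and has each $p_{ij}$ on its frontier, at a point interior to $S_i$. I would do so by treating the pairs one at a time: for the pair $\{i,j\}$, shrink the trace $S_i\cap S_j$ inside itself onto a tiny disk around $p_{ij}$ -- such a deformation exists because $S_i\cap S_j$ is contractible -- which alters $S_i$ and $S_j$ only near $p_{ij}$ and, crucially, leaves every other trace untouched because the traces are pairwise disjoint. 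Two things then have to be checked: that the family stays a good cover (we only push in a boundary arc, so every modified set and every intersection stays contractible), and that every triple orientation is preserved; for the latter one drags the Jordan curve witnessing each orientation along the deformation, exactly as in the proof of Claim~\ref{claim:welldef} (see Figure~\ref{fig:gcowelldefined}), after first choosing those witnesses to avoid the small region being retracted. This normalization, and the verification that it does no harm, is where essentially all the work lies.

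\emph{Drawing the trees and matching orientations.} Once the family is normalized, inside the nonempty connected region $S_i\setminus\bigcup_{j\ne i}S_j$ (together with the frontier points $p_{ij}$) draw a topological tree $T_i$ through every $p_{ij}$, approaching each $p_{ij}$ from that region. Then $T_i\subseteq S_i$ and $T_i$ meets $S_i\cap S_j$ only at $p_{ij}$ (its other points lie in $S_i\setminus\bigcup_{m\ne i}S_m$, which is disjoint from $S_i\cap S_j$), so $T_i\cap T_j\subseteq S_i\cap S_j$ equals $\{p_{ij}\}=:T_i\cp T_j$, and the $T_i$ form a family of pairwise once-intersecting topological trees. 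For a triple $i,j,k$ the points $p_{ij},p_{jk},p_{ki}$ are distinct, and the paths $T_i[p_{ki}\mbox{-}p_{ij}]$, $T_j[p_{ij}\mbox{-}p_{jk}]$, $T_k[p_{jk}\mbox{-}p_{ki}]$ share only their endpoints (any two of them meet inside the relevant single-point intersection), so their union is a Jordan curve $\delta=\delta_i\cup\delta_j\cup\delta_k$ with $\delta_i\subseteq S_i$, $\delta_j\subseteq S_j$, $\delta_k\subseteq S_k$. Hence $\delta$ is admissible in the definition of $\o(S_i,S_j,S_k)$, so by Claim~\ref{claim:welldef} the cyclic order of $\delta_i,\delta_j,\delta_k$ around $\delta$ equals $\o(S_i,S_j,S_k)$, while by definition it equals $\o(T_i,T_j,T_k)$; the orientations therefore agree (and both equal $0$ exactly when the three intersection points coincide). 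This proves the proposition; the interiority condition for good covers (Corollary~\ref{cor:gcispo}) then follows by verifying it for pairwise once-intersecting topological trees, for which the paths $T_i[\,\cdot\,]$ are the natural tool.
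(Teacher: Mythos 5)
Your proposal proves the wrong statement. Proposition \ref{prop:tr} already \emph{assumes} a family of topological trees that pairwise intersect in exactly one point (hypotheses \ref{S1} and \ref{S2}); its content is the conclusions \ref{S3}--\ref{S5}, above all \ref{S4}: that any four such trees satisfy the interiority condition (together with the partition of $\hollow(ABC)$ into the three smaller hollows, $D[A\mbox{-}B\mbox{-}C]$ and the hairs), and \ref{S5} about the location of $D\cp E$. What you construct instead is a replacement of the sets of a good cover by once-intersecting topological trees with all triple orientations preserved --- that is essentially Lemma \ref{lem:gc2tree}, which in the paper is a separate, later step used to deduce Corollary \ref{cor:gcispo} \emph{from} Proposition \ref{prop:tr}. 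Your closing sentence, that the interiority condition ``follows by verifying it for pairwise once-intersecting topological trees, for which the paths $T_i[\,\cdot\,]$ are the natural tool,'' defers exactly the statement to be proved. In the paper this verification is the heart of the argument: one shows that $D'=D[A\mbox{-}B\mbox{-}C]$ must be a (possibly degenerate) Y-shape whose leaves $A\cp D, B\cp D, C\cp D$ appear in counterclockwise order (ruling out the other cases via containments of hollows), then walks around $\partial\hollow(ABD)$, $\partial\hollow(BCD)$, $\partial\hollow(ACD)$ and cancels the there-and-back parts along $D'$ to conclude $\o(ABC)=1$; the case $\o(ABC)=0$ is excluded by a local perturbation at the triple point; and \ref{S5} needs its own argument using the partition from \ref{S4}. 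None of this appears in your write-up, so the main content of the proposition is missing.

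Secondarily, even viewed as a proof of Lemma \ref{lem:gc2tree}, your normalization step is shaky: shrinking the trace $S_i\cap S_j$ onto a small disk around $p_{ij}$ modifies $S_i$ and $S_j$ along the entire trace, not ``only near $p_{ij}$,'' and it is not automatic that the modified sets remain contractible, that $S_i\setminus\bigcup_{j\ne i}S_j$ becomes connected with every $p_{ij}$ on its frontier, or that all triple orientations survive the deformation; the paper avoids these issues by building the trees out of stars placed in the components of $S_i\setminus\bigcup_{j\ne i}S_j$ joined through the pairwise intersections, without deforming the sets at all. But the decisive gap is the first one: the arguments for \ref{S4} and \ref{S5} are absent.
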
  

\begin{seclist}
\item\label{item:toptree} Every set is a topological tree.
    
\smallskip
\item\label{item:1pointintersection} Every pair of sets intersects in exactly one point.\footnote{Two trees are allowed to have multiple branches from their intersection point.}
    
\smallskip
\indent\textit{Then the following hold:}

\smallskip
\item\label{item:jordancurve} The union of any three sets, \begin{math}A,B,C\end{math}, without a common point, contains exactly one cycle, i.e., a Jordan curve. The complement of the cycle has two connected parts, one bounded and one unbounded, by the Jordan Curve Theorem.
The bounded one we call the \emph{hollow} and is denoted by
\begin{math}\hollow(ABC)\end{math}.\footnote{Similarly to hollows of convex sets as defined in \cite{C-3TO}.}
The boundary of \begin{math}\hollow(ABC)\end{math} consists of parts of \begin{math}A,A\cp B,B,B\cp C,C,C\cp A\end{math}, in this order, if \begin{math}\o(ABC)=1\end{math}.
From the boundary of \begin{math}\hollow(ABC)\end{math}, there can be subtrees of \begin{math}A,B,C\end{math} going inwards and outwards, these we call \emph{hairs}.
Hairs might have branchings on them, but they are disjoint from each other. See Figure \ref{fig:toptreeS3}(a).

\smallskip
\item\label{item:intcondition} Any four sets satisfy the interiority condition, thus $\o$ is a partial 3-order.

\item\label{item:hairpartition} Further, if \begin{math}D\in conv(ABC)\end{math}, then
\begin{displaymath}
    \hollow(ABD) \disjunion \hollow(BCD) \disjunion \hollow(ACD)\disjunion D[A\mbox{-}B\mbox{-}C]\cup\{\textit{the union of at most one hairpart from each of }A,B\textit{ and }C\}
\end{displaymath}
gives a partition of \begin{math}\hollow(ABC)\cup\{A\cp D, B\cp D, C\cp D\}\end{math}.

In particular, we have \begin{math}D[A,B,C]\setminus \{A\cp D, B\cp D, C\cp D\}\subset\hollow(ABC)\end{math}.
See Figure \ref{fig:toptreeS3}.

\smallskip
\item\label{item:5points} If \begin{math}D,E\in conv(ABC)\end{math} and the orientation on \begin{math}A,B,C,D,E\end{math} is realizable by five points in general position, then \begin{math}D\cp E\in \hollow(ABC).\end{math}
\end{seclist}

\begin{figure}[t]
	\centering
	\includegraphics[width=14cm]{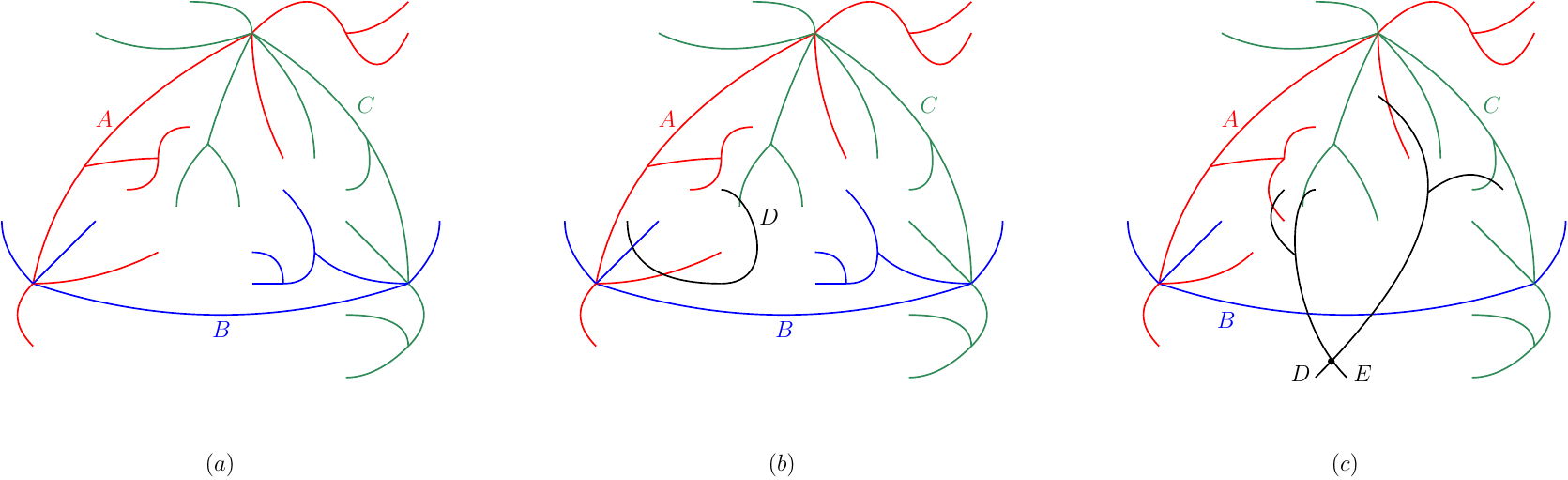}
	\caption{\begin{math}\protect\hollow(ABC)\end{math} and the hairs.}
	\label{fig:toptreeS3}
\end{figure}

By \ref{item:jordancurve} and \ref{item:intcondition}, the orientation $\o$ defined for good covers gives a partial 3-order for any family satisfying the conditions \ref{item:toptree} and \ref{item:1pointintersection}.
We call a partial 3-order that is realizable this way a \TrPO.
If in the realization in addition no three trees have a common intersection, then it is a \TrTO.

\begin{rem}
    Note that because of the hairs several intuitive statements are false. For example, it is possible that \begin{math}D[A,B,C]\subset\hollow(ABC)\end{math} but \begin{math}D\notin conv(ABC)\end{math}. See Figure \ref{fig:toptreeS3}(b).
    Also, if in \ref{item:5points} we only assume that \begin{math}D,E\in conv(ABC)\end{math}, then it is possible that \begin{math}D\cp E\notin \hollow(ABC)\end{math}. See Figure \ref{fig:toptreeS3}(c).
\end{rem}

\begin{proof}[\ref{item:jordancurve}.]
    As $A$ is a tree, there is exactly one path in $A$ between \begin{math}A\cp B\end{math} and \begin{math}A\cp C\end{math}, and this cannot intersect $B$ or $C$.
    Similarly, there is one path in $B$ between \begin{math}A\cp B\end{math} and \begin{math}B\cp C\end{math}, and there is one path in $C$ between \begin{math}A\cp C\end{math} and \begin{math}B\cp C\end{math}.
    The union of these three paths gives the required Jordan curve.    
\end{proof}

\begin{proof}[of \ref{item:intcondition} and \ref{item:hairpartition}.]

    Assume that \begin{math}D\in conv(ABC)\end{math}, then without loss of generality \begin{math}\o(ABD)=\o(BCD)=\o(CAD)=1\end{math}. To show that the interiority condition holds we need to prove that \begin{math}\o(ABC)=1\end{math}.
    First, we assume \begin{math}\o(ABC)\ne 0\end{math}, i.e., \begin{math}A\cp B\cp C=\emptyset\end{math}.
    
    Consider \begin{math}D'=D[A\mbox{-}B\mbox{-}C]\end{math}.
    Either $D'$ is a path, or a Y-shaped star, i.e. the image of a tree with a single 3-degree vertex and no vertices of degree more than 3. 
    
    Assume first that $D'$ is a path, without loss of generality \begin{math}B\cp D\in D[A\mbox{-} C]=D'\end{math}, i.e., \begin{math}B\cp D\end{math} lies between \begin{math}A\cp D\end{math} and \begin{math}C\cp D\end{math} on $D$.
    From \begin{math}\o(CAD)=1\end{math}, we know on which side of $D'$ the hollow \begin{math}\hollow(ACD)\end{math} lies.
    See Figure \ref{fig:toptreeS4}(a).
    
    Note that \begin{math}\hollow(ABD)\subset \hollow(ACD)\end{math} would imply \begin{math}\o(ABD)=\o(ACD)\ne\o(CAD)\end{math}, contradicting our assumptions.
    Similarly, \begin{math}\hollow(BCD)\not\subset \hollow(ACD)\end{math}.
    
    This implies that none of the paths \begin{math}B[D\mbox{-}A]\end{math} and \begin{math}B[D\mbox{-}C]\end{math} can start from \begin{math}B\cp D\end{math} towards the interior of \begin{math}\hollow(ACD)\end{math}, as otherwise they would need to intersect \begin{math}\partial\hollow(ACD)\end{math}, either in $A$ or in $C$, which would give \begin{math}\hollow(ABD)\subset \hollow(ACD)\end{math} or \begin{math}\hollow(BCD)\subset \hollow(ACD)\end{math}, respectively, contradicting our previous observation. See Figure \ref{fig:toptreeS4}(a).
    As both paths start from \begin{math}B\cp D\end{math} towards the exterior of \begin{math}\hollow(ACD)\end{math}, we can regard $D'$ as a (degenerate) Y-shape such that its leaves in the counterclockwise order are \begin{math}A\cp D,B\cp D,C\cp D\end{math}.
    
    The same argument rules out the possibility that $D'$ is a Y-shape such that its leaves in the counterclockwise order are \begin{math}A\cp D,C\cp D,B\cp D\end{math}.
    
    Therefore, we can conclude that $D'$ needs to be a (possibly degenerate) Y-shape such that its leaves in the counterclockwise order are \begin{math}A\cp D,B\cp D,C\cp D\end{math}.
    Denote the branching point of $D'$ by $D_y$ (where \begin{math}D_y=B\cp D\end{math} if $D'$ is degenerate). See Figure \ref{fig:toptreeS4}(b).

\begin{figure}[t]
	\centering
	\includegraphics[width=14cm]{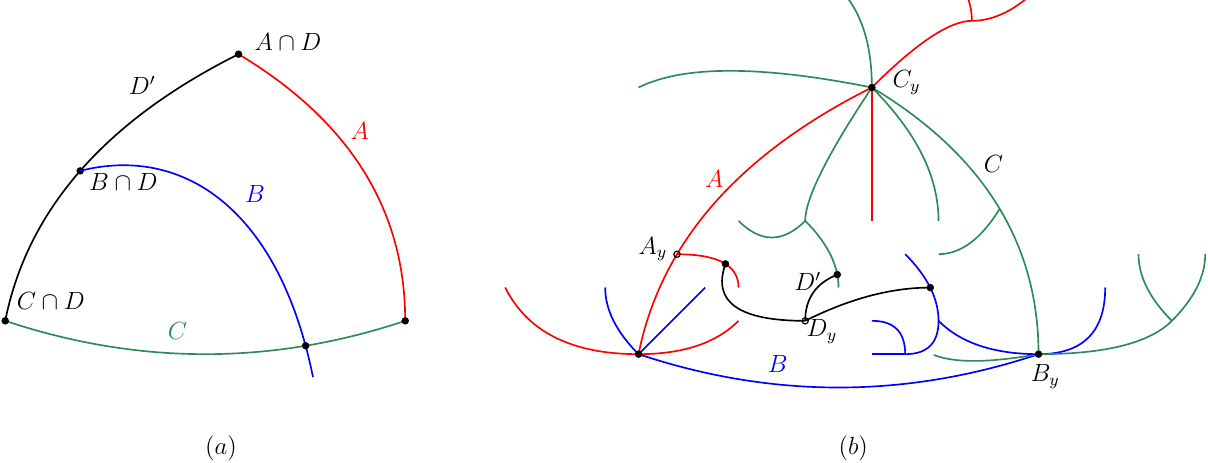}
	\caption{Proof of \ref{item:intcondition} and \ref{item:hairpartition}.}
	\label{fig:toptreeS4}
\end{figure}

    Denote the branching point of \begin{math}A[B\mbox{-}C\mbox{-}D]\end{math} by $A_y$, or if \begin{math}A[B\mbox{-}C\mbox{-}D]\end{math} is a path, then let $A_y$ stand for whichever of \begin{math}A\cp B,A\cp C\end{math} and \begin{math}A\cp D\end{math} lies in the middle of the path.
    In other words, $A_y$ is the point up to which \begin{math}A[D\mbox{-}B]\end{math} and \begin{math}A[D\mbox{-}C]\end{math} follow the same route starting from \begin{math}A\cp D\end{math}.    
    In particular, \begin{math}A[D\mbox{-}A_y]\end{math} does not contain \begin{math}A\cp B\end{math} or \begin{math}A\cp C\end{math} in its interior.
    We similarly define $B_y$ and $C_y$.
    
    Now, walk along \begin{math}\partial\hollow(ABD),\partial \hollow(BCD)\end{math} and \begin{math}\partial\hollow(ACD)\end{math}, starting always from $D_y$.
    Note that in the union of these thee walks, during the walk around \begin{math}\partial\hollow(ABD)\end{math}, we cover the part from $D_y$ to $A_y$, then we go from $A_y$ to $B_y$, then back to $D_y$.
    Around \begin{math}\partial\hollow(BCD)\end{math}, we cover the part from $D_y$ to $B_y$, then we go from $B_y$ to $C_y$, then back to $D_y$.
    Finally, around \begin{math}\partial\hollow(ACD)\end{math}, we cover the part from $D_y$ to $C_y$, then we go from $C_y$ to $A_y$, then back to $D_y$.
    Note that each part from $D'$ occurs there and back, while all the other parts are disjoint, apart from their endpoints.
    But this means that by eliminating the there-and-back parts, we get a walk from $A_y$ to $B_y$ to $C_y$, then back to $A_y$, that has the same orientation as the original walks, and contains \begin{math}A\cp B,B\cp C\end{math} and \begin{math}A\cp C\end{math}. That is, \begin{math}\o(ABC)=1\end{math}, as claimed. This finishes the proof of the interiority condition if \begin{math}\o(ABC)\ne 0\end{math}.
    
    Now assume for a contradiction that \begin{math}\o(ABC)=0\end{math}, i.e., \begin{math}A\cp B\cp C\ne \emptyset\end{math}.
    By \ref{item:1pointintersection}, \begin{math}A\cp B\cp C\end{math} is a single point; denote it by $x$.
    If we replace $A$ with \begin{math}A[x\mbox{-}D]\end{math}, then \begin{math}\o(A,B,C,D)\end{math} remains the same.
    We can similarly replace $B$ and $C$ with \begin{math}B[x\mbox{-}D]\end{math} and \begin{math}C[x\mbox{-}D]\end{math}, respectively.
    This way each of $A$, $B$ and $C$ became a curve ending in $x$.
    But then by a slight perturbation of the curves in the vicinity of $x$ we can achieve that they pairwise intersect once but in three different points, such that \begin{math}\o(ABC)\end{math} becomes $-1$ (see Figure \ref{fig:tripleintersection}). But this would contradict the interiority condition in the case \begin{math}\o(ABC)\ne 0\end{math}, which we have already proved.
    This finishes the proof of the interiority condition if \begin{math}\o(ABC)= 0\end{math}.
        
    \begin{figure}[t]
    	\centering
    	\includegraphics[width=14cm]{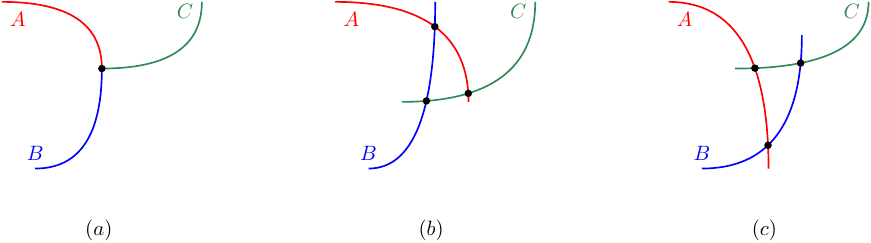}
    	\caption{A triple intersection can be perturbed in two ways.}
    	\label{fig:tripleintersection}
    \end{figure}
        
    The structural description that we have obtained in the \begin{math}\o(ABC)\ne 0\end{math} case implies that
    \begin{align*}
        \hollow(ABC) & = \hollow(ABD) \disjunion \hollow(BCD) \disjunion \hollow(ACD) \disjunion D'\cup \\
        & \cup\{\text{the part of }A\text{ from }A_y\text{ to }A\cp D,\text{ the part of }B\text{ from }B_y\text{ to }B\cp D,\text{ the part of }C\text{ from }C_y\text{ to }C\cp D\}
    \end{align*}
    --- this last part gives the three possible hairsparts.
    Since $D$ intersects \begin{math}\hollow(ABC)\end{math} exactly three times, this also implies \begin{math}D[A,B,C]\setminus \{A\cp D, B\cp D, C\cp D\}\subset\hollow(ABC)\end{math}.
\end{proof}

\begin{proof}[of \ref{item:5points}.]   
    Suppose for a contradiction that \begin{math}x=D\cp E\notin \hollow(ABC)\end{math}.
    
    By \ref{item:hairpartition}, \begin{math}D[A,B,C]\subset \hollow(ABC)\end{math} and thus $x$ falls in one connected component of \begin{math}D\setminus D[A,B,C]\end{math} which implies that the three paths \begin{math}D[E\mbox{-}A]\end{math}, \begin{math}D[E\mbox{-}B]\end{math}, \begin{math}D[E\mbox{-}C]\end{math} all go the same way from $x$ until they reach \begin{math}\hollow(ABC)\end{math}. Denote their intersection point with \begin{math}\partial \hollow(ABC)\end{math} by $D_x$. Note that $D_x$ is one of \begin{math}A\cp D,B\cp D,C\cp D\end{math}.
    
    Similarly, the three paths \begin{math}E[D\mbox{-}A]\end{math}, \begin{math}E[D\mbox{-}B]\end{math}, \begin{math}E[D\mbox{-}C]\end{math}, all start the same way from $x$.
    Denote their intersection point with \begin{math}\partial \hollow(ABC)\end{math} by $E_x$.

    We claim that \begin{math}\o(ADE)=\o(BDE)=\o(CDE)\end{math}.
    Indeed, this follows from the fact that \begin{math}\hollow(ADE)\end{math}, \begin{math}\hollow(BDE)\end{math} and \begin{math}\hollow(CDE)\end{math} are all contained in the union of \begin{math}\hollow(ABC)\end{math} and the topological triangle whose vertices are \begin{math}x,D_x,E_x\end{math}.
    To see this, note that for any of these hollows, $x$ will be a vertex, while the two sides of the hollow adjacent to $x$ will go through $D_x$ and $E_x$, respectively, and then continue inside \begin{math}\hollow(ABC)\end{math} until they reach the other two vertices of the hollow, because of \ref{item:hairpartition}.
    
    But this contradicts that \begin{math}D,E\in conv(ABC)\end{math} and the orientation on \begin{math}A,B,C,D,E\end{math} is realizable by five points in general position, as if in this realization the points $D$ and $E$ are contained in the convex hull of $A$, $B$ and $C$, then two of these three points will fall on different sides of the $DE$ line, so \begin{math}\o(ADE)=\o(BDE)=\o(CDE)\end{math} is not possible.
\end{proof}

Next, we prove that $\o$ behaves essentially the same way on any good cover as on topological trees.

\begin{lem}\label{lem:gc2tree}
    The sets in any good cover, where at most one triple has a non-empty intersection, can be replaced by topological trees that pairwise intersect at most once, such that $\o$ remains unchanged on all triples. 
\end{lem}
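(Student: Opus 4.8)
The plan is to build the replacement trees incrementally, "growing" each set of the good cover into a topological tree while controlling all pairwise intersection points and keeping $\o~$ fixed. First I would handle the generic case where no triple has a common point; the single exceptional triple can be perturbed at the end or handled by a limiting/continuity argument once the generic case is understood. For each ordered pair of sets $(X,Y)$ of the good cover, contractibility of $X\cap Y$ lets us pick a single representative point $p_{XY}\in X\cap Y$; the goal is to produce trees $T_X$ with $T_X\cap T_Y=\{p_{XY}\}$ and $\o(T_X,T_Y,T_Z)=\o(X,Y,Z)$ for all triples. The natural candidate for $T_X$ is a spanning tree, inside $X$, of the set of "contact points" $\{p_{XY}: Y\neq X\}$ together with enough auxiliary branching structure: concretely, take $X$'s contact points and connect them up by simple arcs lying in the (contractible, hence simply connected) set $X$, making sure the arcs for different $Y,Y'$ only meet in the way a tree's edges meet. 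Contractibility of $X$ is exactly what guarantees such a tree exists and that any two arcs inside it can be homotoped past each other, so the combinatorial type of the tree we pick does not matter for intersection patterns with other sets.

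The key point to verify is that $\o~$ is preserved. For a fixed triple $A,B,C$ with empty common intersection, $\o(A,B,C)$ is computed from a Jordan curve $\gamma=\gamma_A\cup\gamma_B\cup\gamma_C$ with $\gamma_X\subset X$; by Claim~\ref{claim:welldef} any such curve gives the same answer. I would choose this curve to pass through the three relevant contact points $p_{AB},p_{BC},p_{CA}$, with $\gamma_A$ running from $p_{CA}$ to $p_{AB}$ inside $A$, and similarly for $B,C$. Such a $\gamma$ can be taken to lie inside the trees $T_A,T_B,T_C$ as well — indeed $T_A$ was constructed so as to contain $p_{CA}$ and $p_{AB}$ and a path between them inside $A$, which we may assume is exactly the arc we used in $\gamma_A$ (or homotopic to it within $A$, hence giving the same orientation). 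Therefore the very same Jordan curve witnesses $\o(T_A,T_B,T_C)=\o(A,B,C)$. One also has to check that in the new picture no three trees acquire a common point and that distinct pairs of trees still meet in exactly one point; the latter follows because $T_X\cap T_Y\subseteq X\cap Y$ has been arranged to be the single chosen point $p_{XY}$ (shrinking the arcs slightly near the contact points if two different trees' arcs through $p_{XY}$ would otherwise overlap), and the former because a common point of $T_A,T_B,T_C$ would be a common point of $A,B,C$.

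For the case where exactly one triple, say $A_0,B_0,C_0$, has a common intersection, I would first apply the generic construction to all other sets and to arcs of $A_0,B_0,C_0$ away from their common region, then locally resolve the triple point: shrink $A_0,B_0,C_0$ to curves ending near the common intersection (as is done in the proof of \ref{S4} with the operation $A\mapsto A[x\text{-}D]$) and note that a triple point of topological trees is allowed in our model, since $\o~$ is defined to be $0$ exactly when the pairwise intersection points coincide. So we may simply keep the trees literally sharing that one point, and $\o~$ takes the value $0$ there just as before. Alternatively, one perturbs and checks consistency via the already-established interiority condition, exactly as in the $\o(ABC)=0$ subcase of \ref{S4}.

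I expect the main obstacle to be the bookkeeping that makes the pairwise intersections exactly singletons rather than merely contractible: when several arcs of $T_X$ (one toward each neighbor $Y$) all emanate from the same region of $X$, and likewise for $T_Y$, one must be careful that $T_X$ and $T_Y$ do not accidentally share a whole sub-arc or meet in more than one point, and that rerouting to fix this does not change which side of the resulting Jordan curves the hollows lie on. This is where contractibility (simple connectivity) of $X$ and of $X\cap Y$ does the real work — it lets us perform all the necessary homotopies of arcs without creating new intersections among the chosen contact points — but turning that into a clean, order-of-operations argument (process the pairs $\{X,Y\}$ in some order, each time carving out a thin neighborhood already used) is the delicate part. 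Figure~\ref{fig:goodcover2tree}(b) shows that the hypothesis "at most one triple has non-empty intersection'' is genuinely needed: with two or more triple points one cannot in general realize the pattern by once-crossing trees, so the argument must and does use this hypothesis precisely when resolving triple points.
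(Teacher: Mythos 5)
Your overall strategy matches the paper's in outline (one chosen point $p_{XY}$ per pairwise intersection, trees $T_X\subset X$ through these points, orientation preserved because the tree's Jordan curve also lies in the original sets and Claim~\ref{claim:welldef} applies, and the unique triple intersection handled by letting the three trees share a single point with no other chosen points among their pairwise intersections). The orientation-preservation step and the treatment of the exceptional triple are essentially fine, although your alternative suggestion of shrinking $A_0,B_0,C_0$ to curves as in the proof of \ref{S4} would not work here, since those sets must still meet every other member of the family.

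However, there is a genuine gap exactly at the point you yourself flag as ``the delicate part'': you give no actual mechanism guaranteeing $T_X\cap T_Y=\{p_{XY}\}$ for all pairs simultaneously. Taking a spanning tree of the contact points of $X$ by arbitrary arcs inside $X$ does not suffice: an arc of $T_X$ joining $p_{XZ}$ to $p_{XW}$ may be forced to traverse the region $X\cap Y$ (e.g.\ when $X\setminus Y$ is disconnected, as for two crossing strips), and there it can cross $T_Y$ in points other than $p_{XY}$, possibly several times. The appeal to contractibility (``any two arcs can be homotoped past each other without creating new intersections'') is not a proof; pushing an arc of $T_X$ off $T_Y$ can create new intersections with $T_Z$, and some crossings between $T_X$ and $T_Y$ inside $X\cap Y$ are topologically unavoidable, so they must be \emph{concentrated at} $p_{XY}$ rather than eliminated. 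This is precisely what the paper's construction achieves and what your sketch is missing: inside each exclusive component of $X$ one places a star with leaves on the boundary arcs, and inside each intersection $X\cap Y$ \emph{every} piece of \emph{both} trees is a curve emanating from $p_{XY}$ to such a leaf, all drawn pairwise non-crossing; hence within $X\cap Y$ the two trees meet only at $p_{XY}$, and outside the pairwise intersections they cannot meet at all (here the hypothesis of at most one triple intersection is used, since a part of $T_X$ lying in $X\cap Z$ could meet $Y$ only at a triple point). Without some routing scheme of this kind---or an explicit inductive/minimal-position argument replacing it---your construction does not yield trees that pairwise intersect exactly once, which is the heart of Lemma~\ref{lem:gc2tree}.
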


\begin{proof}
    See Figure \ref{fig:goodcover2tree}(a) for an illustration of the proof.
    Note that we did not assume that the family is pairwise intersecting, thus the orientation may be undefined for some triples, in which case it will remain undefined.
    
    We can assume that every set intersects some other set from our family.
    For each set $A$ from the good cover, and for each connected component $A_i$ of those points that are only in $A$ and in no other set, we do the following.
    On the boundary of $A_i$ there are pairwise disjoint arcs which are on the boundary of some other set as well. 
    We put a topological star inside $A_i$ whose center is on one of these arcs, and there is a leaf on each of the rest of the arcs. 
    
    Now, assume that there is no triple intersection.
    For each non-empty intersection \begin{math}A\cap B\end{math} it is simply connected as the family is a good cover. We select a point $p_{AB}$ inside it ($p_{AB}$ will be the intersection point of the topological trees corresponding to $A$ and $B$), and draw non-crossing curves from $p_{AB}$, one to every leaf of the earlier defined stars that are on the boundary of \begin{math}A\cap B\end{math}. Each such curve is added to the topological tree it touches at the boundary of \begin{math}A\cap B\end{math}.
    
    If there is a triple intersection \begin{math}A\cap B\cap C\ne\emptyset\end{math}, then
    we treat \begin{math}(A\cap B) \cup (A\cap C)\cup (B\cap C)\end{math} as one double intersection, and do the same as before, selecting a point $p_{ABC}$ from \begin{math}A\cap B\cap C\end{math} (and no other points from \begin{math}A\cap B,A\cap C\end{math}, and \begin{math}B\cap C\end{math}, so in this case there are no points $p_{AB}$, $p_{AC}$, $p_{BC}$).
    In the remainder of the proof, we will not discuss this special triple intersection region in detail---all steps work for it the same way.

    \begin{figure}[t]
    	\centering
    	\includegraphics[width=12cm]{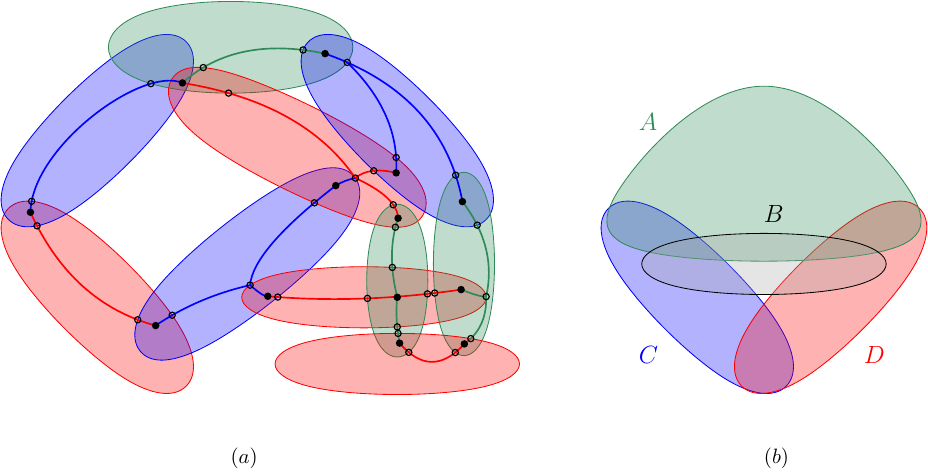}
    	\caption{(a) A good cover redrawn with topological trees that pairwise intersect in at most one point. (b) A good cover that cannot be redrawn this way.}
    	\label{fig:goodcover2tree}
    \end{figure}
    
    It is easy to see that we get topological trees, as all the points inside a set $A$ are eventually connected and no cycle can be created.
    Denote this tree by \begin{math}T_A\subset A\end{math}.
    If \begin{math}A\cap B\ne \emptyset\end{math} then the corresponding topological trees intersect  exactly once, in the point selected inside the intersection of their corresponding sets, i.e., \begin{math}T_A\cp T_B=p_{AB}\end{math}. Otherwise the corresponding topological trees are disjoint.
    
    We are left to show that the orientations are preserved. If three sets were not pairwise intersecting then the corresponding trees are also not such. Otherwise, the $3$ corresponding trees satisfy \ref{item:1pointintersection} and $\o$ is well-defined on them by \ref{item:jordancurve}. If they have a common intersection, then the corresponding three trees do too (and so $\o$ is preserved when it is equal to $0$). Finally, let $A,B,C$ be three pairwise intersecting sets with no triple-intersection from the good cover. Take the (unique) Jordan curve $\gamma$ that is the union of three curves \begin{math}\gamma_A\subset T_A\end{math}, \begin{math}\gamma_B\subset T_B\end{math} and \begin{math}\gamma_C\subset T_C\end{math}.
    As \begin{math}T_X\subset X\end{math} for any set $X$, also \begin{math}\gamma_A\subset A\end{math}, \begin{math}\gamma_B\subset B\end{math} and \begin{math}\gamma_C\subset C\end{math}, so the same $\gamma$ shows that \begin{math}\o(T_AT_BT_C)=\o(ABC)\end{math}.    
\end{proof}   

\begin{rem}
    The condition that at most one triple has a non-empty intersection is necessary because we allow trees to intersect in at most one point.
    If, for example, \begin{math}A\cap B\cap C\ne\emptyset\end{math} and \begin{math}A\cap B\cap D\ne\emptyset\end{math} but \begin{math}A\cap B\cap C\cap D=\emptyset\end{math}, then this obviously cannot be realized by trees that pairwise intersect at most once. See Figure \ref{fig:goodcover2tree}(b) for such a good cover.
    But this is essentially the only obstruction---our proof can be modified in a straightforward way to work also if we require that there are no four sets such that \begin{math}A\cap B\cap C\ne\emptyset\end{math} and \begin{math}A\cap B\cap D\ne\emptyset\end{math}.
\end{rem}

\begin{cor}\label{cor:gcispo}
    The orientation $\o$ is a partial 3-order on good covers having pairwise intersecting sets.
\end{cor}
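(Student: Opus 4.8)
The plan is to deduce the interiority condition for arbitrary good covers from the tree case already handled in \ref{S4} of Proposition~\ref{prop:tr}, via the replacement Lemma~\ref{lem:gc2tree}. First I would note that $\o~$ is indeed a partial orientation: it is well-defined by Claim~\ref{claim:welldef}, and the six-fold antisymmetry is immediate from the definition, since cyclically permuting $A,B,C$ leaves both the Jordan curve $\gamma$ and the cyclic order of $\gamma_A,\gamma_B,\gamma_C$ on it unchanged, while transposing two of them reverses that cyclic order and hence flips the sign. So the whole content is the interiority condition.

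To check it, I would fix four sets $A,B,C,D$ of the good cover with $\o(ABD)=\o(BCD)=\o(CAD)=1$ and aim to show $\o(ABC)=1$. The point is that these hypotheses are exactly strong enough to make Lemma~\ref{lem:gc2tree} applicable to the four-element subfamily $\{A,B,C,D\}$, which is itself a good cover as a subfamily of one. Indeed, $\o~$ is only defined on pairwise intersecting triples, so each of $\{A,B,D\}$, $\{B,C,D\}$, $\{C,A,D\}$ is pairwise intersecting; these three triples together involve all six pairs of $\{A,B,C,D\}$, so every pair of the four sets meets. Also, $\o(ABD),\o(BCD),\o(CAD)$ are nonzero, which forces $A\cap B\cap D=B\cap C\cap D=C\cap A\cap D=\emptyset$; hence among the four triples of $\{A,B,C,D\}$ only $\{A,B,C\}$ can have a nonempty common intersection. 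Thus $\{A,B,C,D\}$ is a good cover in which at most one triple intersects, and Lemma~\ref{lem:gc2tree} replaces $A,B,C,D$ by topological trees $T_A,T_B,T_C,T_D$ that pairwise intersect at most once and on which $\o~$ takes the same value on every triple; since all six original pairs meet, the corresponding pairs of trees meet in the selected points, so the trees pairwise intersect in exactly one point.

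Then the four trees satisfy conditions \ref{S1} and \ref{S2} of Proposition~\ref{prop:tr}, so \ref{S4} gives the interiority condition for them: from $\o(T_AT_BT_D)=\o(T_BT_CT_D)=\o(T_CT_AT_D)=1$ we conclude $\o(T_AT_BT_C)=1$. As $\o~$ agrees on the trees and on the original sets, this reads $\o(ABC)=1$, which finishes the verification and shows $\o~$ is a 3-order on good covers.

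I do not expect a genuine obstacle: all the real work lies in Claim~\ref{claim:welldef}, Lemma~\ref{lem:gc2tree} and \ref{S4} of Proposition~\ref{prop:tr}. The only step that needs a little care is the bookkeeping in the middle paragraph, namely observing that the interiority hypotheses simultaneously force pairwise intersection of all four sets (so the trees really satisfy \ref{S2}) and at most one triple intersection (so Lemma~\ref{lem:gc2tree} applies at all).
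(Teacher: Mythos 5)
Your proof is correct and follows essentially the same route as the paper: apply Lemma~\ref{lem:gc2tree} to the four sets (noting the hypotheses force at most one non-empty triple intersection) and then invoke \ref{S4} for the resulting trees. Your extra bookkeeping about all six pairs intersecting and the trees meeting exactly once is just a more explicit spelling-out of what the paper states tersely.
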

\begin{proof}
    We need to show that $\o$ satisfies the interiority condition.
    Take four sets such that \begin{math}\o(ABD)=\o(BCD)=\o(CAD)=1\end{math}.
    In particular, these four sets can have at most one non-empty triple intersection, \begin{math}A\cap B\cap C\end{math}.
    Therefore, by Lemma \ref{lem:gc2tree} we can convert \begin{math}A, B, C, D\end{math} to trees while preserving $\o$, and so \ref{item:intcondition} implies \begin{math}\o(ABC)=1\end{math}.
\end{proof}

\begin{rem}
    This also implies that $\o$ is a partial 3-order on convex sets, reproving a key lemma from \cite{C-3TO}.
\end{rem}

Denote a partial/total 3-order realizable by good covers having pairwise intersecting sets (resp.\ by pairwise $1$-intersecting topological trees), by \GCPO/\GCTO (resp.\ by \TrPO/\TrTO), and the respective subfamilies by calligraphic, as usual. As mentioned earlier, we refer to such 3-orders as being realizable by good covers without explicitly adding that it has pairwise intersecting sets, as the orientation is not defined otherwise anyway. Similarly, we refer to 3-orders realizable by topological trees without explicitly adding that they are pairwise $1$-intersecting. We follow this convention also for other subfamilies of good covers defined later on.

\begin{cor}\label{cor:GCTr}
    $\cGCTO= \cTrTO$, i.e. the total 3-orders realizable by good covers are the same as the total 3-orders realizable by topological trees.
\end{cor}

This implies that to establish Theorem \ref{thm:main}, it is enough to prove $\cpTO \subsetneq \cTrTO$, that is, to find a 3-order realizable by points in general position which is not realizable by topological trees.

\section{Subfamilies of \cTO}\label{sec:subfamilies}

\def\kicsiny{0.8}
\begin{figure}[p]
        	\begin{center}
			\begin{minipage}{.24\textwidth}
				\centering
				\scalebox{\kicsiny}{\input{Figures/p1}}
			\end{minipage}
			\begin{minipage}{.24\textwidth}
				\centering
				\scalebox{\kicsiny}{\input{Figures/p2}}
			\end{minipage}
			\begin{minipage}{.24\textwidth}
				\centering
				\scalebox{\kicsiny}{\input{Figures/p3}}
			\end{minipage}
			\begin{minipage}{.24\textwidth}
				\centering
				\scalebox{\kicsiny}{\input{Figures/p4}}
			\end{minipage}
		\end{center}
		\vspace{-20pt}
		\begin{center}
			\begin{minipage}{.24\textwidth}
				\centering
				\scalebox{\kicsiny}{\input{Figures/g1}}
			\end{minipage}
			\begin{minipage}{.24\textwidth}
				\centering
				\scalebox{\kicsiny}{\input{Figures/g2}}
			\end{minipage}
			\begin{minipage}{.24\textwidth}
				\centering
				\scalebox{\kicsiny}{\input{Figures/g3}}
			\end{minipage}
			\begin{minipage}{.24\textwidth}
				\centering
			    \scalebox{\kicsiny}{\input{Figures/g4}}
			\end{minipage}
		\end{center}
	\vspace{-5pt}
		\hrule
	\vspace{-10pt}

		\begin{center}
			\begin{minipage}{.24\textwidth}
				\centering
				\scalebox{\kicsiny}{\input{Figures/p5}}
			\end{minipage}
			\begin{minipage}{.24\textwidth}
				\centering
				\scalebox{\kicsiny}{\input{Figures/p6}}
			\end{minipage}
			\begin{minipage}{.24\textwidth}
				\centering
				\scalebox{\kicsiny}{\input{Figures/p7}}
			\end{minipage}
			\begin{minipage}{.24\textwidth}
				\centering
				\scalebox{\kicsiny}{\input{Figures/p8}}
			\end{minipage}
		\end{center}
		\vspace{-20pt}
		\begin{center}
			\begin{minipage}{.24\textwidth}
				\centering
				\scalebox{\kicsiny}{\input{Figures/g5}}
			\end{minipage}
			\begin{minipage}{.24\textwidth}
				\centering
				\scalebox{\kicsiny}{\input{Figures/g6}}
			\end{minipage}
			\begin{minipage}{.24\textwidth}
				\centering
				\scalebox{\kicsiny}{\input{Figures/g7}}
			\end{minipage}
			\begin{minipage}{.24\textwidth}
				\centering
				\scalebox{\kicsiny}{\input{Figures/g8}}
			\end{minipage}
		\end{center}
	\vspace{-5pt}
		\hrule
	\vspace{-10pt}
		\begin{center}
			\begin{minipage}{.24\textwidth}
				\centering
				\scalebox{\kicsiny}{\input{Figures/p9}}
			\end{minipage}
			\begin{minipage}{.24\textwidth}
				\centering
				\scalebox{\kicsiny}{\input{Figures/p10}}
			\end{minipage}
			\begin{minipage}{.24\textwidth}
				\centering
				\scalebox{\kicsiny}{\input{Figures/p11}}
			\end{minipage}
			\begin{minipage}{.24\textwidth}
				\centering
				\scalebox{\kicsiny}{\input{Figures/p12}}
			\end{minipage}
		\end{center}
		\vspace{-20pt}
		\begin{center}
			\begin{minipage}{.24\textwidth}
				\centering
				\scalebox{\kicsiny}{\input{Figures/g9}}
			\end{minipage}
			\begin{minipage}{.24\textwidth}
				\centering
				\scalebox{\kicsiny}{\input{Figures/g10}}
			\end{minipage}
			\begin{minipage}{.24\textwidth}
				\centering
				\scalebox{\kicsiny}{\input{Figures/g11}}
			\end{minipage}
			\begin{minipage}{.24\textwidth}
				\centering
				Not a \gTO?
			\end{minipage}
		\end{center}
	\vspace{-5pt}
		\hrule
	\vspace{-10pt}
		\begin{center}
			\begin{minipage}{.24\textwidth}
				\centering
				\scalebox{\kicsiny}{\input{Figures/p13}}
			\end{minipage}
			\begin{minipage}{.24\textwidth}
				\centering
				\scalebox{\kicsiny}{\input{Figures/p14}}
			\end{minipage}
			\begin{minipage}{.24\textwidth}
				\centering
				\scalebox{\kicsiny}{\input{Figures/p15}}
			\end{minipage}
			\begin{minipage}{.24\textwidth}
				\centering
				\scalebox{\kicsiny}{\input{Figures/p16}}
			\end{minipage}
		\end{center}
		\vspace{-25pt}
		\begin{center}
			\begin{minipage}{.24\textwidth}
				\centering
				\scalebox{\kicsiny}{\input{Figures/g13}}
			\end{minipage}
			\begin{minipage}{.24\textwidth}
				\centering
				\scalebox{\kicsiny}{\input{Figures/g14}}
			\end{minipage}
			\begin{minipage}{.24\textwidth}
				\centering
				\scalebox{\kicsiny}{\input{Figures/g15}}
			\end{minipage}
			\begin{minipage}{.24\textwidth}
				\centering
				\scalebox{\kicsiny}{\input{Figures/g16}}
			\end{minipage}
		\end{center}
		\vspace*{-3pt}
		\caption{Out of the $16$ order types of points on $6$ points, we could realize $15$ with curves.}
		\label{fig:g3TO}
\end{figure}

Here we define several subfamilies of good covers and point sets that are of interest to us.
For completeness, we also include the already defined families.
The first two families were already defined by \cite{Knuth}, while the third is the usual order type on planar points.
The containment relations among the respective \cTO families are depicted in Figure \ref{fig:3TOposet}.

\begin{itemize}
    \item{\TO:} A total 3-order over any set, i.e., an orientation on the ordered triples that satisfies the interiority condition.

    \item{CC system:} A total 3-order over any set that satisfies the interiority condition and the following transitivity condition. If  \begin{math}\o(ABC)=\o(ABD)=\o(ABE)=\o(AED)=\o(ADC)\end{math}, then they are equal to \begin{math}\o(AEC)\end{math}.
    CC systems are in correspondence to abstract order types which encode pseudoconfigurations of points. 
    
    \item{\pTO:} A total 3-order on a planar point set in general position, where each ordered triple is oriented depending on whether the points are in clockwise or counterclockwise position.
    This is more commonly known as the order type of the point set.
    
    \item{\fastTO:} Special case of \pTO where the points form a fast-growing point set. (Defined in Section \ref{sec:T}.)
    
    \item{\convexpTO:} Special case of \pTO where the points are in convex position. (All \convexpTO's of some fixed size are isomorphic up to relabeling.)
    
    \item{\GCTO:} A 3-order realizable by a good cover of pairwise intersecting sets in the plane, such that no three of them have a point in common, where each triple is oriented according to $\o$ defined in Section \ref{sec:gc}.
    
    \item{\CTO:} Special case of \GCTO where each set is convex. (A family of such convex sets is called a holey family and studied in detail in \cite{C-3TO}.)
    
    \item{\TrTO:} Special case of \GCTO where each set is a topological tree.
    
    \item{\YTO:} Special case of \TrTO where each topological tree is a \emph{Y-shape}, meaning that it has only one branching point, which is of degree three, i.e., it is an image of $K_{1,3}$.
    We also require for every pair of topological trees that they either cross (as opposed to touch) or one of them ends in their intersection point.\footnote{Such intersections could also be elongated to form a crossing. We do allow the branching point of a Y-shape to also be its intersection point with another Y-shape; this is relevant when the intersection point is the branching point for both trees. For every Y-shape this can occur with at most one other Y-shape, as no three have a point in common.} 
    
    \item{\TTO:} Special case\footnote{Strictly speaking, T-shapes are not topological trees because they are unbounded, but it would be easy to turn them into Y-shapes by replacing the lines/halflines with sufficiently long segments.} of \YTO where each set is a \emph{T-shape}, i.e., one horizontal line, and one vertical downward halfline, ending in an interior point of the horizontal line.
    Note that no lines or halflines coincide in the representation because any two T-shapes intersect in exactly one point.
    
    \item{\gTO:} Special case\footnote{Strictly speaking, curves are not Y-shapes because they do not have a branching point, but we could introduce one on each at an arbitrary place to turn them into Y-shapes.} of \YTO where each set is a simple curve, and these curves pairwise cross (as opposed to touch).
    We have depicted the representation of some \pTO's as \gTO's in Figure \ref{fig:g3TO}.
    
    \item{\lTO:} Special case of \gTO where each curve is a line, i.e., a collection of lines in general position (no two are parallel and no three pass through the same point)

     \item{Topological drawings of the complete graph:} Orientations given by simple topological planar drawings of a complete graph, studied by \cite{BFSchSchS}. 
\end{itemize}

\begin{figure}
    \centering
    \includegraphics[width=14cm]{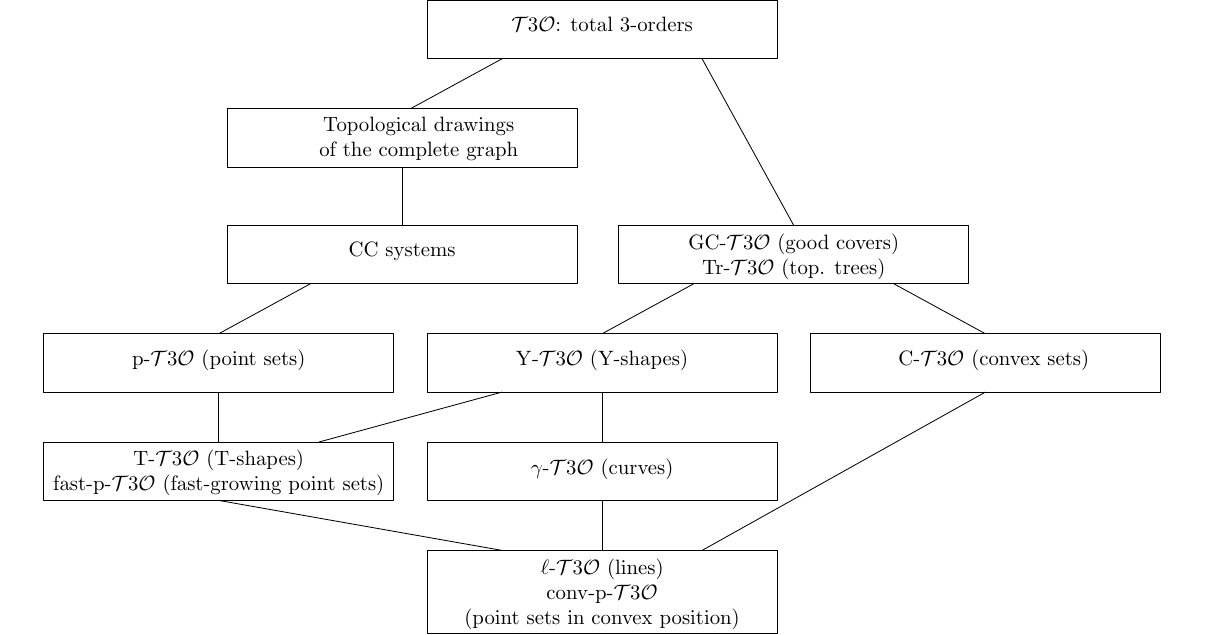}
    \caption{Containment relations among different \cTO's.
    Most containments are true by definition, our main result, Theorem \ref{thm:main}, is the non-containment between \cpTO and \cGCTO.
    In most cases we did not prove that the containments are strict, but we conjecture that all of them are.
    The equivalence $\cGCTO= \cTrTO$ is Corollary \ref{cor:GCTr}, $\cTTO= \cfastTO$ is Proposition \ref{prop:fastT}, and $\clTO= \cconvexpTO$ is Corollary \ref{cor:lines}.}
    \label{fig:3TOposet}
\end{figure}

\begin{obs}[\cite{GP84}]
	If we have lines \begin{math}l_1,\ldots,l_n\end{math} ordered according to their slopes in clockwise circular order and points \begin{math}p_1,\ldots,p_n\end{math} in convex position, ordered in counterclockwise order, then \begin{math}\o(l_1,\ldots,l_n)=\o(p_1,\ldots,p_n)\end{math}.
\end{obs}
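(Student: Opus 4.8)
The plan is to reduce the claim to a statement about triples and then to a short determinant computation. Since the value of $\o$ on an $n$-tuple is just the collection of its values on triples and $\o$ is antisymmetric, it suffices to show $\o(l_i,l_j,l_k)=\o(p_i,p_j,p_k)$ for every $i<j<k$. All point sets in convex position carrying a fixed counterclockwise labeling have the same order type, so $\o(p_i,p_j,p_k)=1$ for all $i<j<k$; hence the task is to prove $\o(l_i,l_j,l_k)=1$ whenever $l_i,l_j,l_k$ appear in this clockwise circular order of slopes. Writing $m_1=l_i$, $m_2=l_j$, $m_3=l_k$, the three lines are in general position, so $\mathbb{R}^2\setminus(m_1\cup m_2\cup m_3)$ has a unique bounded component, an open triangle $T$, and $\o(m_1,m_2,m_3)=1$ iff $\partial T$, traversed counterclockwise, meets the edges lying on $m_1,m_2,m_3$ in this cyclic order. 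The vertices of $T$ are the pairwise intersection points $v_{st}=m_s\cap m_t$, and the edge of $T$ joining $v_{st}$ and $v_{su}$ lies on $m_s$; consequently the counterclockwise traversal of $\partial T$ visits its edges on $m_1,m_2,m_3$ in this cyclic order precisely when $v_{12},v_{23},v_{13}$ occur in counterclockwise order, i.e.\ $\o(m_1,m_2,m_3)=\o(v_{12},v_{23},v_{13})$, the right-hand side being the order type of these three points (which is nonzero since $m_1,m_2,m_3$ are not concurrent). It therefore remains to read off $\o(v_{12},v_{23},v_{13})$ from the directions of the three lines.

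Fix direction vectors $u_1,u_2,u_3$ of $m_1,m_2,m_3$ and set $D=\det(u_1,u_2)\det(u_2,u_3)\det(u_3,u_1)$; since negating any $u_t$ leaves $D$ unchanged, $\operatorname{sign}D$ depends only on the three lines, it is nonzero because no two lines are parallel, and by a continuity/reference-example argument the directions of $m_1,m_2,m_3$ are in clockwise circular order exactly when $D>0$. To evaluate $\o(v_{12},v_{23},v_{13})$, apply an orientation-preserving affine map sending $v_{12}$ to the origin and $m_1,m_2$ to the two coordinate axes; such a map exists (as $m_1\neq m_2$) and changes neither $\o(v_{12},v_{23},v_{13})$ nor $\operatorname{sign}D$ (its linear part has positive determinant, which multiplies each factor of $D$ by the same positive scalar). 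After this normalization $m_3$ is a line $\alpha x+\beta y=1$ with $\alpha,\beta\neq 0$ by general position, so $v_{13}=(1/\alpha,0)$ and $v_{23}=(0,1/\beta)$, whence a one-line computation gives $\o(v_{12},v_{23},v_{13})=-\operatorname{sign}(\alpha\beta)$, while $u_1=(1,0)$, $u_2=(0,1)$, $u_3=(\beta,-\alpha)$ give $D=-\alpha\beta$; thus $\o(v_{12},v_{23},v_{13})=\operatorname{sign}D$. Combining, if $i<j<k$ then $D>0$, so $\o(l_i,l_j,l_k)=\o(v_{12},v_{23},v_{13})=1=\o(p_i,p_j,p_k)$, which proves the observation.

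The only real difficulty is the bookkeeping of orientation conventions: one must pin down exactly what ``clockwise circular order of slopes'' means on $\mathbb{RP}^1$ (where a line corresponds to an \emph{undirected} direction), check that this lines up with both the counterclockwise labeling of the point set and the sign convention in the definition of $\o$ on a good cover, and verify that the affine normalization preserves all the relevant signs. Once these conventions are fixed every step is elementary; alternatively, the statement is an instance of the classical point--line duality of Goodman and Pollack \cite{GP84}.
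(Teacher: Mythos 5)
Your argument is correct, but it is a genuinely different route from the paper's: the paper offers no proof at all for this statement, treating it as a known consequence of Goodman--Pollack point--line duality and simply citing \cite{GP84}, whereas you give a self-contained elementary verification. Your reduction is sound: equality of $\o$ on the two $n$-tuples means equality on all corresponding triples, both sides reduce to showing every increasing triple has orientation $+1$, and your identification of $\o(m_1,m_2,m_3)$ with the point order type $\o(v_{12},v_{23},v_{13})$ of the triangle's vertices is exactly right (the counterclockwise edge order $m_1,m_2,m_3$ forces the vertex order $v_{12},v_{23},v_{13}$). The normalization and the computation $\o(v_{12},v_{23},v_{13})=-\operatorname{sign}(\alpha\beta)=\operatorname{sign}D$ check out, and the product $D=\det(u_1,u_2)\det(u_2,u_3)\det(u_3,u_1)$ is indeed invariant under negating any $u_t$, so it is a function of the unoriented lines. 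The one place you wave your hands --- ``clockwise circular order of slopes iff $D>0$'' --- is the step you should make precise: the space of triples of pairwise non-parallel directions in $\mathbb{RP}^1$ has exactly two connected components, distinguished by the cyclic order, $D$ is continuous and nonvanishing on it, so one reference example per component (e.g.\ direction angles $90^\circ,45^\circ,0^\circ$ giving $D=1>0$) pins down the sign; with that sentence added the proof is complete. What the two approaches buy: yours is short, elementary and keeps all orientation conventions explicit, which is genuinely useful since the sign bookkeeping is the only delicate point; the paper's citation leverages the full Goodman--Pollack duality, which gives not just this triple-wise statement but the whole correspondence between line arrangements and allowable sequences, and is what makes Corollary \ref{cor:lines} ($\clTO=\cconvexpTO$) an immediate consequence.
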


\begin{cor}\label{cor:lines}
    $\clTO= \cconvexpTO$.
\end{cor}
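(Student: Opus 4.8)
The plan is to read off the statement directly from the Goodman--Pollack observation, using only the elementary fact that a generic line arrangement can realize any prescribed circular order of slopes.

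First I would prove $\clTO\subseteq\cconvexpTO$. Take any $\lTO$, realized by lines $l_1,\dots,l_n$ in general position. Since no two of the $l_i$ are parallel, their slopes are pairwise distinct and hence inherit a circular order from the circle of directions; relabel the lines so that $l_1,\dots,l_n$ are listed in clockwise circular order of slopes. Let $p_1,\dots,p_n$ be any $n$ points in convex position, listed in counterclockwise order. By the Goodman--Pollack observation, $\o(l_1,\dots,l_n)=\o(p_1,\dots,p_n)$, i.e.\ $\o(l_i,l_j,l_k)=\o(p_i,p_j,p_k)$ for every triple $i<j<k$. Thus the given $\lTO$ is, up to relabeling, the $\cconvexpTO$ carried by $\{p_1,\dots,p_n\}$.

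For the reverse inclusion $\cconvexpTO\subseteq\clTO$, start from an arbitrary $\cconvexpTO$, realized by $n$ points $q_1,\dots,q_n$ in convex position in counterclockwise order. Choose $n$ pairwise distinct slopes $s_1,\dots,s_n$ occurring in this clockwise circular order on the circle of directions, and for each $i$ take a line $l_i$ of slope $s_i$. Fixing the slopes, the set of intercept vectors $(b_1,\dots,b_n)$ for which some three of the lines are concurrent is a finite union of hyperplanes, so a generic choice of intercepts yields lines with no three concurrent; since distinct slopes already preclude parallel or crossing-free pairs, $l_1,\dots,l_n$ is a valid $\lTO$ representation. Applying the Goodman--Pollack observation once more gives $\o(l_1,\dots,l_n)=\o(q_1,\dots,q_n)$, so this $\cconvexpTO$ lies in $\clTO$.

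The two inclusions together give $\clTO=\cconvexpTO$; as a by-product, for each $n$ there is a single $\lTO$ on $n$ elements, namely the unique $\cconvexpTO$ on $n$ elements. There is no substantial obstacle here: the only point needing a word of care is that lines in general position can be arranged with slopes in any prescribed circular order, which the hyperplane-avoidance argument above settles, and everything else is an immediate application of the quoted observation.
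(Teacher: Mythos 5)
Your proof is correct and follows essentially the same route as the paper, which presents the corollary as an immediate consequence of the quoted Goodman--Pollack observation; your only additions are the routine details the paper leaves implicit (reordering by slope, and realizing any prescribed circular order of slopes by a generic arrangement with no three lines concurrent). One small caveat: your closing by-product should say there is a single \lTO\ on $n$ elements only up to relabeling (the paper counts $(n-1)!$ labeled \lTO's), but this side remark does not affect the argument.
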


\section{T-shapes and fast-growing point sets}\label{sec:T}
Call a total 3-order on $n$ elements \emph{fast-growing} if its elements have an ordering \begin{math}\{p_1,\dots,p_n\}\end{math} such that there is a permutation \begin{math}\pi\colon \{1,\dots,n\}\to\{1,\dots,n\}\end{math} such that for every $i<j<k$ we have that: \begin{math}\o(p_ip_jp_k)=-1\end{math} if and only if \begin{math}\pi(j)>\pi(i),\pi(k)\end{math}. We denote the family of fast-growing total 3-orders by \cfastTO. Note that for some fixed $\pi$ this is the total 3-order defined by the point set \begin{math}\{(i,N_{\pi(i)}) : 1\le i\le n\}\end{math} if $N_i$ is a sufficiently fast-growing function, like $2^i$. 
We call this a canonical representation of this fast-growing total 3-order. Note that \begin{math}p_{\pi^{-1}(1)},\dots, p_{\pi^{-1}(n)}\end{math} is the order of these points according to their $y$-coordinates. We also call a point set fast-growing if its order type is a fast-growing total 3-order. We note that this definition of fast-growing point sets is similar to and a restriction of the definition of decomposable point sets by \cite{balko}.
An anonymous referee also pointed out that fast-growing point sets are a subclass of the dual of shellable arrangements, introduced by \cite{AFORSV}, which appeared after the first version of our paper.

We have seen for each $\pi$ such a fast-growing point set. In particular, if $\pi$ is the identity permutation, we get that all point sets in convex position are fast-growing, as their order type is the same as for the point set \begin{math}\{(i,2^i) : 1\le i\le n\}\end{math}.
Note that \begin{math}p_1,p_n,p_{\pi(1)}\end{math} and $p_{\pi(n)}$ are always (not necessarily different) extremal points of \cP.
It is easy to check that any point set on at most five points is fast-growing, while on six points out of the 16 \pTO's 12 are fast-growing; see Figure \ref{fig:fast3TO} and its caption.

\begin{figure}
		\begin{center}
			\begin{minipage}{.24\textwidth}
				\centering
				\input{Figures/p1sz}
			\end{minipage}
			\begin{minipage}{.24\textwidth}
				\centering
				\input{Figures/p2sz}
			\end{minipage}
			\begin{minipage}{.24\textwidth}
				\centering
				\input{Figures/p3sz}
			\end{minipage}
			\begin{minipage}{.24\textwidth}
				\centering
				\input{Figures/p4}
			\end{minipage}
		\end{center}
		
		\begin{center}
			\begin{minipage}{.24\textwidth}
				\centering
			\input{Figures/p5sz}
			\end{minipage}
			\begin{minipage}{.24\textwidth}
				\centering
			\input{Figures/p6}
			\end{minipage}
			\begin{minipage}{.24\textwidth}
				\centering
			\input{Figures/p7sz}
			\end{minipage}
			\begin{minipage}{.24\textwidth}
				\centering
		        \input{Figures/p8sz}
			\end{minipage}
		\end{center}
		
		\begin{center}
			\begin{minipage}{.24\textwidth}
				\centering
				\input{Figures/p9sz}
			\end{minipage}
			\begin{minipage}{.24\textwidth}
				\centering
				\input{Figures/p10sz}
			\end{minipage}
			\begin{minipage}{.24\textwidth}
				\centering
				\input{Figures/p11sz}
			\end{minipage}
			\begin{minipage}{.24\textwidth}
				\centering
				\input{Figures/p12sz}
			\end{minipage}
		\end{center}
		
		\begin{center}
			\begin{minipage}{.24\textwidth}
				\centering
				\input{Figures/p13}
			\end{minipage}
			\begin{minipage}{.24\textwidth}
				\centering
				\input{Figures/p14}
			\end{minipage}
			\begin{minipage}{.24\textwidth}
				\centering
				\input{Figures/p15sz}
			\end{minipage}
			\begin{minipage}{.24\textwidth}
				\centering
				\input{Figures/p16sz}
			\end{minipage}
		\end{center}
		\caption{Out of the $16$ order types of points on $6$ points, exactly $12$ are fast growing.
		For a representation with points of these $12$ order types we have numbered the points such that the numbers indicate the order of the points from bottom to top in a canonical representation, i.e. the point $p_i$ gets the label $\pi(i)$. The order of the points themselves required to check if it defines a fast-growing total 3-order, i.e., the $x$-coordinate order in a canonical representation, is not explicitly given but can be easily determined by creating a canonical representation, placing the points one-by-one, each time high enough, in the order given by the labels.
		A simple case analysis shows that in the remaining $4$ cases, whose points are labeled with letters, their order types are not fast growing, and by Proposition \ref{prop:fastT}, also not \TTO's.} 
		\label{fig:fast3TO}
	\end{figure}

Corollary \ref{cor:lines} implies that $\clTO\subset\cfastTO$.
This containment is strict because, for example, four points in non-convex position is a fast-growing point-set yet its order type cannot be realized by lines.
The growth rates are also different: we have $(n-1)!$ \lTO's, about $(n!)^2$ \fastTO's, and about $(n!)^4$ \pTO's on $n$ labeled elements, see \cite{GP93,Handbook}.

\begin{figure}[t]
	\centering
	\includegraphics[width=14cm]{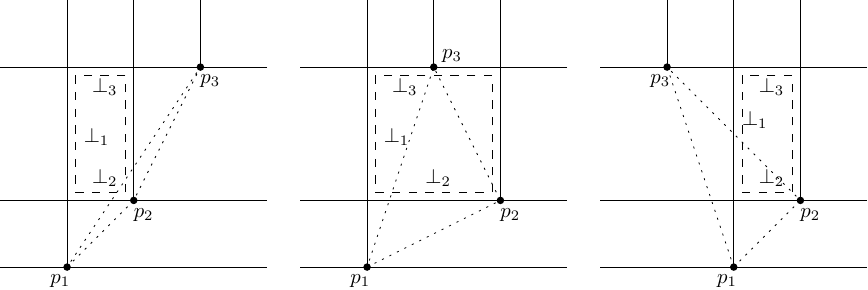}
	\caption{The correspondence between the 3-order of $\perp$-shapes and fast-growing point sets. The cases when the orientation of the triple is $+1$ are listed here, the rest are symmetrical.}
	\label{fig:fastgrowing}
\end{figure}

We can also give the following characterization.

\begin{prop}\label{prop:fastT}
    $\cfastTO= \cTTO$.
\end{prop}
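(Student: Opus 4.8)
Here is how I would approach Proposition~\ref{prop:fastT}.

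The plan is to give matching combinatorial descriptions of $\cfastTO$ and $\cTTO$ in terms of permutations and then read off the equality. Recall that a T-shape is determined by the $x$-coordinate $v$ of its vertical stem and the height $h$ of its horizontal line. Since any two T-shapes of a family meet in exactly one point, within such a family the numbers $v$ are pairwise distinct and the numbers $h$ are pairwise distinct; moreover, for any three members the three pairwise intersection points are distinct, so the induced orientation $\o$ never takes the value $0$ and the family really does represent a \TTO.

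The crux is computing $\o$ on three T-shapes $T_1,T_2,T_3$, indexed so that $v_1<v_2<v_3$. First I would note that $T_a\cap T_b$ is the single point where the horizontal line of the lower of the two crosses the stem of the higher of the two; hence the three pairwise intersection points are three of the four corners of an axis-parallel rectangle, the missing corner being the branching point of the middle-height member. By \ref{S3}, $T_1\cup T_2\cup T_3$ contains a unique cycle, and the observation above shows it is exactly the boundary of that rectangle: the lowest member carries the bottom edge, the highest member one vertical edge, and the middle-height member the top edge together with the other vertical edge. Reading off the counterclockwise cyclic order of these three arcs --- a short case check (Figure \ref{fig:fastgrowing}) --- gives the key identity
$$\o(T_1T_2T_3)=+1 \iff h_2=\min\{h_1,h_2,h_3\},$$
that is, the orientation is $+1$ exactly when the stem-wise middle T-shape has the lowest horizontal line. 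Because any three members of a T-shape family, listed in stem order, are still in stem order, this at once gives a characterization: \emph{a total 3-order $\o$ (on $\{1,\dots,n\}$, after a suitable relabeling) lies in \cTTO if and only if there is a permutation $\pi$, the ranking of the heights, with $\o(ijk)=+1\iff\pi(j)<\pi(i),\pi(k)$ for all $i<j<k$}; for the ``if'' direction, realize $\o$ by the T-shapes with stem at $x=i$ and horizontal line at height $\pi(i)$.

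It remains to compare this with the definition of \cfastTO: a total 3-order $\o$ lies in \cfastTO if and only if, after a relabeling, there is a permutation $\rho$ with $\o(ijk)=-1\iff\rho(j)>\rho(i),\rho(k)$ for all $i<j<k$. Substituting $\pi(r)=n+1-\rho(r)$ rewrites the \cTTO condition above as $\o(ijk)=+1\iff\rho(j)>\rho(i),\rho(k)$, which is precisely the negation of the \cfastTO condition; hence $\o\in\cfastTO$ if and only if $-\o\in\cTTO$. Finally, reflecting a T-shape family across a vertical line again yields a T-shape family and reverses every orientation, so $\cTTO$ is closed under $\o\mapsto-\o$. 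Combining the last two facts gives $\cfastTO=\cTTO$.

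The step I expect to be the main obstacle is the geometric part of the key lemma. The final formula is clean, but extracting the cycle needs care: the horizontal lines are unbounded, so only short sub-arcs of the three T-shapes bound the hollow while the rest form ``hairs'', and the clockwise/counterclockwise bookkeeping must be carried out precisely --- this is exactly where \ref{S3}, which pins down the order in which the three arcs occur around the hollow, does the work. Everything after the lemma is routine manipulation of permutations.
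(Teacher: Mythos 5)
Your proof is correct; I checked the key identity (for stems $v_1<v_2<v_3$ the unique cycle is the boundary of the rectangle you describe, and $\o(T_1T_2T_3)=+1$ exactly when the stem-wise middle T-shape has the lowest hat) in all six height patterns, and the ensuing permutation bookkeeping and the closure of \cTTO{} under $\o\mapsto-\o$ via a vertical reflection are sound. Your route differs from the paper's in packaging: the paper first notes that T-shapes and $\perp$-shapes realize the same class (a $180^\circ$ rotation), then sets up a direct geometric dictionary --- attach a $\perp$-shape at each point of a fast-growing realization (orientations agree by the case check of Figure \ref{fig:fastgrowing}), and conversely rescale the hat heights of a $\perp$-representation to be fast-growing and read off the branch points. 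You instead never build a point-set realization: you extract an explicit combinatorial rule for $\o$ on three T-shapes, turn \cTTO{} into a permutation condition, observe it is exactly the negation of the \cfastTO{} condition, and close the gap with the reflection symmetry. What each buys: the paper's argument is shorter and immediately produces the fast-growing point set, hiding the case analysis in a figure; yours makes the characterization of \cTTO{} fully explicit (in particular it shows closure under negation as a byproduct) and avoids the height-rescaling step, at the cost of the extra negation/reflection step forced by working with T-shapes rather than $\perp$-shapes, which realize the reversed rule. Both arguments ultimately rest on the same finite case analysis of three shapes versus three points.
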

\begin{proof}
    First, observe that the 3-orders defined by T-shapes are the same as defined by $\perp$-shapes (shapes that we get from T-shapes by reflecting on the $x$-axis) and so it is enough to show that the latter are the same as \fastTO's.
    
    Given a \fastTO, we can realize it with a fast-growing point set, and then for each point put a $\perp$-shape whose hat and leg meet at that point. 
    It is easy to see that the orientation of this family of $\perp$-shapes is the same as for the point set, see Figure \ref{fig:fastgrowing}. 
    
    Conversely, given a \TTO, we realize it with $\perp$-shapes. Notice that changing the $y$-coordinates of the hats, while preserving their order, does not change the 3-order of the $\perp$-shapes. Thus, we can change the $y$-coordinates so that they form a fast-growing function. Take the meeting point of the hat and leg for each $\perp$-shape; these form a fast-growing point set. As before, it is easy to see that the order type of this point set is the same as the orientation of the original family of $\perp$-shapes.
\end{proof}

\begin{cor}\label{cor:Tnotinp}
    $\clTO\subsetneq \cTTO\subsetneq \cpTO$.
\end{cor}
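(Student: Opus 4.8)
The plan is to verify the two inclusions and then their strictness separately, leaning on the two identifications already in hand: $\clTO=\cconvexpTO$ (Corollary~\ref{cor:lines}) and $\cTTO=\cfastTO$ (Proposition~\ref{prop:fastT}).

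For the inclusions: $\clTO\subseteq\cTTO$ follows because every point set in convex position is fast-growing (its order type agrees with that of $\{(i,2^i)\}$, as observed in Section~\ref{sec:T}), so $\cconvexpTO\subseteq\cfastTO$; combining this with the two identifications gives $\clTO=\cconvexpTO\subseteq\cfastTO=\cTTO$. The inclusion $\cTTO\subseteq\cpTO$ is immediate: $\cTTO=\cfastTO$, and a \fastTO is by definition a special case of \pTO.

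For strictness of $\clTO\subsetneq\cTTO$, I would exhibit the order type of four points with one lying in the interior of the triangle spanned by the other three. This is a \pTO, and since every point set on at most five points is fast-growing (Section~\ref{sec:T}), it lies in $\cfastTO=\cTTO$. It is not in $\clTO=\cconvexpTO$: in the order type of any four points in convex position no point lies in the convex hull of the other three, whereas here the $conv$ relation records exactly such a containment, so the two order types differ. Hence $\clTO\subsetneq\cTTO$.

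For strictness of $\cTTO\subsetneq\cpTO$ I would use the six-element order types. By the analysis summarised in Figure~\ref{fig:fast3TO}, exactly $12$ of the $16$ order types on six points admit a fast-growing numbering, so any of the remaining four is a \pTO that is not a \fastTO, hence not a \TTO by Proposition~\ref{prop:fastT}. The only genuinely nontrivial point in the whole statement is the finite case check behind Figure~\ref{fig:fast3TO}: for each of those four order types one must show that there is no ordering $p_1,\dots,p_6$ and permutation $\pi$ with $\o(p_ip_jp_k)=-1\iff\pi(j)>\pi(i),\pi(k)$. Using the reformulation from Section~\ref{sec:T} — each $p_{\pi(i)}$ must see the earlier points $\{p_{\pi(j)}:j<i\}$ in the same radial order as $p_{\pi(n)}$ does, when looking around starting from $p_{\pi(n)}$ — this reduces, after fixing the (few) candidate choices for the extremal labels $p_1,p_n,p_{\pi(1)},p_{\pi(n)}$, to checking that some radial-consistency condition fails in each case. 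This is routine but must be carried out case by case, and it is the main obstacle in an otherwise formal argument.
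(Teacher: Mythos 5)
Your proposal is correct and follows essentially the same route the paper takes (the corollary is justified there by the remarks preceding it): the inclusions come from Corollary~\ref{cor:lines} and Proposition~\ref{prop:fastT} together with the observation that convex-position sets are fast-growing, strictness of the first containment from a four-point set in non-convex position, and strictness of the second from the four six-point order types that fail to be fast-growing (Figure~\ref{fig:fast3TO}), whose verification the paper likewise leaves as a finite case analysis. The only difference is cosmetic: the paper additionally records a counting argument (roughly $(n!)^2$ \fastTO's versus $(n!)^4$ \pTO's) that would give the second strict inequality without any case check.
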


\section{The number of tangencies in a family of \texorpdfstring{$t$}{t}-intersecting trees}\label{sec:tangencies}
Before we can start the proof of Theorem \ref{thm:main}, we need one more tool, in order to be able to switch from tangencies to crossings among the sets of some large enough family at the cost of losing some sets.
The following lemma was recently shown by \cite{KP22} in the special case when we have curves instead of trees; our proof uses several ideas from their proof.

\begin{lem}\label{lem:treetangencies}
	In the tangency graph of a family of topological trees that pairwise intersect in at most $t$ points, and no three trees have a common point, there is no $K_{t+3,c}$ for some large enough $c$ (depending only on $t$).
\end{lem}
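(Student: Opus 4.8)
The plan is to argue by contradiction: assume that for every $c$ the tangency graph contains a copy of $K_{t+3,c}$, and bound such a $c$ by a function of $t$ alone. Fix trees $A_1,\dots,A_{t+3}$ and $B_1,\dots,B_c$ of the family with every $A_i$ tangent to every $B_j$, and let $p_{ij}$ be the tangency point of $A_i$ and $B_j$. Since no three trees share a point, for each fixed $j$ the points $p_{1j},\dots,p_{t+3,j}$ are distinct, and in a small neighbourhood of $p_{ij}$ the only tree present besides $B_j$ is $A_i$. First I would pass to the minimal subtrees $B_j':=B_j[p_{1j}\mbox{-}\dots\mbox{-}p_{t+3,j}]$, suppressing degree-two vertices so they are again topological trees. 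Then the leaves of $B_j'$ lie among $p_{1j},\dots,p_{t+3,j}$, so $B_j'$ has at most $t+3$ leaves, hence at most $2t+4$ vertices; its underlying abstract tree is one of boundedly (in $t$) many; it still meets each $A_i$ in at most $t$ points and is still tangent to $A_i$ at $p_{ij}$; and $B_j'\cap B_{j'}'$ still has at most $t$ points for $j\ne j'$. Symmetrically I would shrink each $A_i$ to $A_i':=A_i[p_{i1}\mbox{-}\dots\mbox{-}p_{ic}]$; this preserves everything relevant and makes the whole picture a finite planar graph.

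Next I would homogenise by pigeonhole. Assign to each $B_j'$ its \emph{type}: the isomorphism class of its abstract shape $T$; for each $i$, the edge or vertex of $T$ carrying $p_{ij}$; the rotation (cyclic order of branches) at every branching point of $T$; for each $i$, the local combinatorial picture of the tangency at $p_{ij}$ (which side of $A_i$ the branches of $B_j'$ occupy, or which sector at a branching point of $A_i$ they enter), which is well defined since no third tree is near $p_{ij}$; and, for each edge of $T$, the order along it of the tangency points lying on that edge. The number of types is bounded by some $C_0(t)$, so at least $c/C_0(t)$ of the $B_j'$ share one type; relabel these as $B_1',\dots,B_m'$ with $m\ge c/C_0(t)$.

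The crux, and the main obstacle, is to show that $m\le f(t)$ for an explicit $f$; here I would adapt the argument of Keszegh and P\'alv\"olgyi \cite{KP22}, which is exactly this statement when $T$ is a single edge (all trees are arcs). The idea is that if $m$ is large, then an Erd\H{o}s--Szekeres-type selection produces two homogeneous trees $B:=B_j'$ and $B':=B_{j'}'$ whose feet on $A_i'$ are nested in the same way for \emph{every} index $i$ simultaneously; tracing $B$ and $B'$ in parallel through the common shape $T$, the interleaving of their feet on the trees $A_1',\dots,A_{t+3}'$ (together with the fact that, at each foot, $B$ and $B'$ hug the same side of the same $A_i'$ with no other tree nearby) forces more than $t$ crossings between $B$ and $B'$, contradicting $|B\cap B'|\le t$. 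This would give $c\le C_0(t)\cdot f(t)$, proving the lemma. I expect the delicate points to be: making the per-edge forcing of \cite{KP22} cooperate across the branchings of $T$ so that the forced crossings all accumulate on the single pair $B,B'$ rather than being distributed among many pairs (this is presumably where the bound $t+3$, rather than $t+2$, enters, one unit being lost at a branch or an extremal tangency); and quantifying the selection step so that its threshold remains a function of $t$ only.
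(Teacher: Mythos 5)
Your setup (pruning the blue trees to the subtrees spanned by their tangency points and pigeonholing into boundedly many combinatorial ``types'') matches the first half of the paper's argument, but the heart of the lemma is exactly the step you leave open. You reduce everything to the claim that among many same-type trees one can force more than $t$ crossings between some pair, and you defer this to ``adapting [KP22],'' explicitly flagging as unresolved how the per-edge forcing is to cooperate across branchings. That is not a detail: it is the content of the lemma once the pigeonholing is done. The paper does not need any Erd\H{o}s--Szekeres-type selection or simultaneous nestedness at all; it proves directly that \emph{any} two blue trees in the same class (topologically equivalent, leaves labeled by the red trees they touch, same rotation system) must intersect in at least $t+1$ points, by induction on the number of leaves: one finds two leaves $v_1,v_2$ whose connecting path $P$ can be removed leaving a tree, shows via a Jordan-curve argument (using the equal rotation systems at the corresponding branch points $q,q'$ and a red tree joining a third pair of corresponding leaves) that $P$ must meet $B'$ or $P'$ must meet $B$, and then deletes a leaf-path and inducts, gaining one crossing per step.

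Two ingredients that the paper needs for that topological forcing are missing from your plan and would also be needed (in some form) for your route. First, the reduction to \emph{pairwise disjoint} red pieces: the paper cuts each of the $t+3$ trees by the other $t+2$ into at most $1+(t+2)t$ parts and pigeonholes the blue trees according to which part of each red tree they touch, so that afterwards the relevant red arcs are pairwise disjoint. Your ``type'' records how each $B_j'$ sits relative to the $A_i'$, but not how the $A_i'$ intersect one another; without disjointness, the red tree joining corresponding feet of $B$ and $B'$ may cross the other red arcs, and the Jordan-curve/interleaving obstruction you want to invoke simply is not there in the stated form. Second, blue--blue intersections at branching points: the paper passes to an independent set (each blue tree has at most $t+1$ branch points, each shared with at most one other blue tree) so that the crossing found in each induction step is never a branch point and the crossings counted are genuinely distinct; your proposal does not address this, and it is one of the places where the ``accumulation on a single pair'' you worry about actually has to be argued. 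As written, then, the proposal is a reasonable outline of the easy half plus an unproven (and under-specified) core step, so it does not yet constitute a proof.
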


\begin{proof}
	Suppose for a contradiction that there is a family of topological trees as in the lemma such that its tangency graph contains $K_{t+3,c}$ as a subgraph. Call the $t+3$ trees that form one part of this bipartite graph \emph{red}, and the $c$ trees that form the other part of the bipartite graph \emph{blue}. Each red tree is cut by the other $t+2$ red trees into at most $1+(t+2)t$ parts, and by the pigeonhole principle there are at least \begin{math}c_2=c/(1+(t+2)t)^{t+3}\end{math} blue trees that intersect the same part from each red tree. Choosing these red parts, from now on we assume that we have a family of red and blue trees whose tangency graph contains a $K_{t+3,c_2}$ and the red trees are pairwise disjoint. After small local redrawing operations of the blue trees we can also assume that each tangency point is a leaf of the respective blue tree. Furthermore, from each blue tree we keep only the edges that participate in some path connecting two tangency points (of this tree and one of the $t+3$ red trees). 

	Thus, the blue trees each have exactly $t+3$ leaves, corresponding to tangency points, which implies that they have at most $t+1$ branching points. 
	As no three trees have a common point, each of these branching points can belong to at most one other blue tree.
    Define a graph on the $c_2$ blue trees where two trees are connected by an edge if they intersect in a branching point of one of them.
    Because of our previous observation, this graph can have at most $c_2(t+1)$ edges.
    Therefore, we can select an independent set of size \begin{math}c_3=c_2/(2t+1)\end{math} from it by Tur\'an's theorem.
		
	There are only bounded many options (depending on $t$) how such a blue tree can be embedded in the plane. Formally, if we label the leaves with the red trees touching at that leaf and put two blue trees in the same equivalence class whenever they are topologically equivalent (including the labels) and also have the same rotation system (i.e., the edges leaving each branching point come in the same circular order), then we get a bounded number of equivalence classes.
	Thus, if $c_3$ is big enough, then there are two blue trees in the same equivalent class---denote them by $B$ and $B'$.

\begin{figure}[t]
	\centering
	\includegraphics[width=8cm]{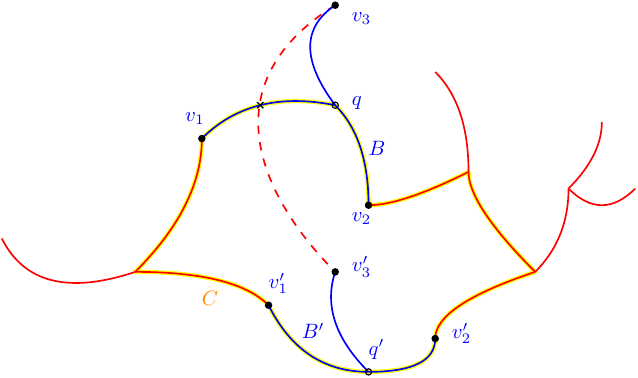}
	\caption{If there is no intersection of the two blue trees on one of the paths connecting $v_1$ with $v_2$ and $v_1'$ with $v_2'$, then we get a contradiction, as at $q$ and $q'$ one goes inside the yellow Jordan curve $C$, the other outside $C$.}
	\label{fig:bluetrees}
\end{figure}
	
	To summarize, we are given two trees, $B$ and $B'$, both with $t+3$ labeled leaves, such that $B$ and $B'$ are topologically equivalent (including their labels) and their rotation systems are also the same.
	We are left to show that such a pair of trees intersects in at least $t+1$ points, as this gives the desired contradiction. We show this by induction on the number of leaves of $B$ and $B'$. For $t+1=0$ this trivially holds.
	Next, we assume that $t\ge 0$ and that for $t-1$ the statement holds, and show that it also holds for $t$. 
	
	First, we claim that there are two leaves, $v_1$ and $v_2$, of $B$ such that after removing from $B$ the path $P$ connecting $v_1$ and $v_2$ (without removing the branching points that lie on $P$), we get a connected graph (a tree). To see that, take any branching point $s$ of $B$ (which exists as we have $t+3\ge 3$ leaves) and take a branching point $q$ which is farthest from $s$ in $B$ in the combinatorial distance (i.e., the connecting path has the largest possible number of branching points); by definition, $q$ is equal to $s$ if there are no other branching points. The at least two edges incident to $q$ that do \emph{not} go towards $s$ must lead to leaves, as $q$ was a farthest branching point. It is easy to see that any two of these leaves are good choices for $v_1$ and $v_2$. Take also the corresponding leaves $v_1',v_2'$ and branching point $q'$ in $B'$ and the path $P'$ connecting them. See Figure \ref{fig:bluetrees}.
	
	Next, we show that $P$ must intersect $B'$ or $P'$ must intersect $B$. Assume on the contrary. Observe that the endpoints of $P$ and $P'$ are tangency points with the same pair of red trees. $P$, $P'$ and the two red trees together partition the plane into two connected regions, such that $q$ and $q'$ are on their common boundary, a Jordan curve, which we denote by $C$. Take a third leaf, $v_3$, of $B$ and a path $Q$ connecting $q$ and $v_3$. Similarly, let $Q'$ be the path of $B'$ connecting $q'$ with the leaf $v_3'$ corresponding to $v_3$. Due to the indirect assumption, both $Q$ and $Q'$ are disjoint from $C$. As the rotation system around $q$ and $q'$ is the same, they leave $C$ towards a different side of $C$, and therefore must end (in $v_3$ and $v_3'$) on a different side of $C$. However, $v_3$ and $v_3'$ are connected by a red tree that is disjoint from $C$, a contradiction.
	   
	Thus, without loss of generality, we can assume that $P$ and $B'$ intersect in a point $p$. Notice that $p$ cannot be equal to $q$, as it is a branching point of $B'$. If $p$ is on the path of $B$ connecting $v_1$ with $q$, then we delete this path from $B$ and the path connecting $v_1'$ with $q'$ in $B'$. Otherwise, we delete the path of $B$ connecting $v_2$ with $q$ and the path of $B'$ connecting $v_2'$ with $q$.
	In both cases, the two trees we get each have $t-1$ leaves, they do not intersect anymore in $p$, and they are also topologically equivalent (with the leaves labeled, and the rotation system preserved), thus we can apply induction to find $t$ intersection points between them. Together with $p$, there are $t+1$ intersection points between $B$ and $B'$, a contradiction.
\end{proof}

\section{Proof of Theorem \ref{thm:main}}\label{sec:main}

Now we are ready to prove Theorem \ref{thm:main}.
Our starting point is the following geometric observation.

\begin{figure}[t]
	\centering
	\includegraphics[width=14cm]{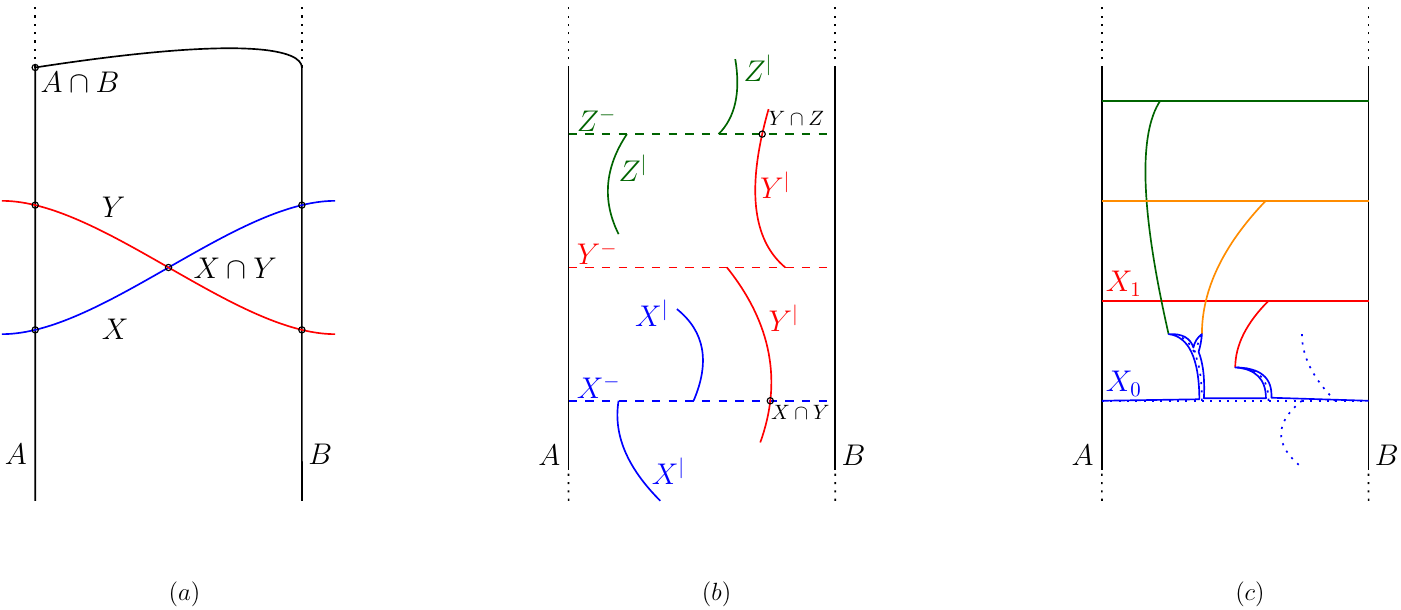}
	\caption{(a) Proposition \ref{prop:halfline}, (b),(c) Lemma \ref{lem:halfline}.}
	\label{fig:halfline}
\end{figure}

\begin{prop}\label{prop:halfline}
    Let $A$ and $B$ be two disjoint (topological) halflines. Let $X$ and $Y$ be two topological trees that have one intersection point. Assume further that both $X$ and $Y$ intersect both $A$ and $B$ in exactly one point, these four intersection points are all different, and none of them are branching points of the trees. 
    Assume also that \begin{math}X[A\mbox{-}B]\end{math} and \begin{math}Y[A\mbox{-}B]\end{math} 
    leave each of the two halflines on the same side, i.e., they touch the halflines from the same direction.
    
    If \begin{math}\o(XYA)\ne\o(XYB)\end{math}, then \begin{math}X\cp Y=X[A,B] \cp Y[A,B]\end{math} and \begin{math}X\cp Y\end{math} cannot be a crossing point between \begin{math}X[A\mbox{-}B]\end{math} and \begin{math}Y[A\mbox{-}B]\end{math}.\footnote{However, it can be a touching point between them.}
\end{prop}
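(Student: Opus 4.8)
The plan is to prove the contrapositive of both conclusions simultaneously. Write $v:=X\cp Y$ and $a_X:=X\cp A$, $b_X:=X\cp B$, $a_Y:=Y\cp A$, $b_Y:=Y\cp B$, and set $\pi_X:=X[A\mbox{-}B]$, $\pi_Y:=Y[A\mbox{-}B]$. First, $v$ is distinct from all four crossing points: if for instance $v=a_X$, then $a_X\in A\cap Y$, which together with $|A\cap Y|=1$ forces $a_X=a_Y$, contradicting that the four crossing points are distinct; the other three cases are symmetric. In the tree $X$ let $m_X$ be the median of $\{a_X,b_X,v\}$, i.e.\ the branching point of the tripod $X[a_X\mbox{-}b_X\mbox{-}v]$ (possibly equal to one of the three points), and define $m_Y$ in $Y$ analogously. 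Then $v$ lies on the tail of $X$ beyond $a_X$ (resp.\ beyond $b_X$) exactly when $m_X=a_X$ (resp.\ $m_X=b_X$), so $v\in X[A,B]$ iff $m_X\notin\{a_X,b_X\}$, and $v\in\pi_X\cap\pi_Y$ iff $m_X=v=m_Y$. Hence it suffices to show: if $m_X\in\{a_X,b_X\}$, or $m_Y\in\{a_Y,b_Y\}$, or ($m_X=v=m_Y$ and $v$ is a transversal crossing of $\pi_X$ and $\pi_Y$), then $\o(XYA)=\o(XYB)$.

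To compute the orientations, I would pass to the sphere $S^2=\mathbb R^2\cup\{\infty\}$. Because $A$ and $B$ are disjoint halflines, $A'\cup B'$ with $A':=A\cup\{\infty\}$, $B':=B\cup\{\infty\}$ is an arc, so $D:=S^2\setminus(A'\cup B')$ is an open topological disk; this is the only place where the unboundedness of $A$ and $B$ enters, and it is essential, since it makes $D$ simply connected. As $X$ is bounded and $X\cap(A\cup B)=\{a_X,b_X\}$, we get $X\subset\overline D$ with $X$ meeting $\partial D$ only over $a_X,b_X$; likewise for $Y$, and $v\in D$. On the boundary circle $\partial D$ the lifts of $a_X$ and $a_Y$ lie on the $A$-part and those of $b_X,b_Y$ on the $B$-part, so the relevant boundary lifts occur in a short list of cyclic orders, and the hypothesis that $\pi_X$ and $\pi_Y$ leave $A$ (resp.\ $B$) on the same side says exactly that $\pi_X$ and $\pi_Y$ start from the same lift of $a_X$ (resp.\ of $b_X$). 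The Jordan curve $\gamma_A$ bounding $\hollow(XYA)$ equals $X[v\mbox{-}a_X]\cup Y[v\mbox{-}a_Y]\cup A[a_X\mbox{-}a_Y]$, whose $A$-piece is a boundary arc of $D$; since $D$ is simply connected, $\gamma_A$ is determined, up to an orientation-preserving isotopy of $\overline D$, by this boundary arc together with the cyclic order around $v$ (and, in the degenerate cases, around $m_X$ and $m_Y$) of the finitely many branches it uses. Consequently $\o(XYA)$, and likewise $\o(XYB)$, is a combinatorial function of these data with no winding ambiguity.

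It then remains to run the case analysis, for which Figure~\ref{fig:halfline}(a) is a useful guide. In the generic case $m_X=v=m_Y$, the branches of $X$ and $Y$ at $v$ used by $\gamma_A$ and $\gamma_B$ are those pointing towards $a_X,b_X,a_Y,b_Y$; if $v$ is a transversal crossing of $\pi_X$ and $\pi_Y$, then these four branches occur around $v$ in the cyclic order $a_X,a_Y,b_X,b_Y$ or $a_X,b_Y,b_X,a_Y$, and reading off the two orientations from the boundary data then yields $\o(XYA)=\o(XYB)$. In the tail case $m_X=a_X$ (the cases $m_X=b_X$, $m_Y\in\{a_Y,b_Y\}$ being symmetric), one may first replace $X$ by $X[v\mbox{-}b_X]=X[v\mbox{-}a_X]\cup\pi_X$, which changes neither $\o(XYA)$, $\o(XYB)$, nor the two hollows; now $\gamma_A$ and $\gamma_B$ share the whole sub-arc $X[v\mbox{-}a_X]$, their remaining portions touch it only at its endpoints $v$ and $a_X$, and a short check of the possible local pictures at $a_X$ and at $v$ shows they approach it from the same side, so $\gamma_A$ and $\gamma_B$ bound regions on the same side of $X[v\mbox{-}a_X]$, i.e.\ $\o(XYA)=\o(XYB)$. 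The main obstacle is precisely this last bookkeeping: one must sort through all the cyclic-order possibilities on $\partial D$ and at $v$, $m_X$, $m_Y$ that the same-side hypothesis allows and verify the sign in each, while making sure the isotopies comparing $\gamma_A$ with $\gamma_B$ stay inside $\overline D$ — for it is the simple connectivity of $D$, hence the unboundedness of $A$ and $B$, that rules out any winding and makes the orientation a well-defined combinatorial invariant in the first place.
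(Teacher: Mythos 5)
Your reduction to the contrapositive, the passage to the sphere, and the observation that the complement of $A\cup B\cup\{\infty\}$ is an open disk are fine, and the generic case (a transversal crossing of $X[A\mbox{-}B]$ and $Y[A\mbox{-}B]$ at $v=X\cp Y$) is indeed the easy part. The gap is in the tail cases $m_X\in\{a_X,b_X\}$ or $m_Y\in\{a_Y,b_Y\}$, which are exactly the content of the first conclusion $X\cp Y=X[A,B]\cp Y[A,B]$. There your argument comes down to the claim that $\hollow(XYA)$ and $\hollow(XYB)$ lie on the same side of the shared arc $X[v\mbox{-}a_X]$, justified only by ``a short check of the possible local pictures at $a_X$ and at $v$.'' This is not a local matter: which of the two sectors at an endpoint of the shared arc belongs to the interior of $\hollow(XYA)$ (resp.\ $\hollow(XYB)$) is precisely the global information you are trying to determine, and it cannot be read off from the cyclic orders at $a_X$ and $v$ alone. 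Pinning it down requires using globally that $Y$ meets each of $A$, $B$, $X$ exactly once, together with the same-side hypothesis — for instance one must distinguish whether the tail of $X$ leaves $a_X$ on the side of $X[A\mbox{-}B]$ or on the opposite side, and whether $Y[v\mbox{-}b_Y]$ reaches the far halfline through $a_Y$ (forcing the doubly degenerate case $m_Y=a_Y$) or around the free end of $A$. You acknowledge this yourself (``the main obstacle is precisely this last bookkeeping''), but that bookkeeping \emph{is} the proof: as written, the degenerate cases, i.e.\ the actual substance of the proposition, are not established.

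For comparison, the paper's proof avoids the case analysis entirely. It attaches a curve to $A$, far from $X$ and $Y$, so that $A$ and $B$ become once-intersecting trees; the hypothesis $\o(XYA)\ne\o(XYB)$ then gives (up to swapping $X$ and $Y$) $\o(XAB)=\o(XYA)=\o(XBY)$, i.e.\ $X\in conv(ABY)$, and \ref{S4} yields $X[A,B,Y]\subset\hollow(ABY)$, from which $X\cp Y=X[A,B]\cp Y[A,B]$ follows; the non-crossing claim is then a one-line observation of the kind you make in your generic case. So if you want to keep a direct topological argument you must genuinely enumerate the admissible configurations on the boundary of the cut-open disk and at $v$, $m_X$, $m_Y$ and verify the sign in each; otherwise, reducing to the already proved structural statement \ref{S4} as the paper does is far shorter.
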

\begin{proof}
    We can extend $A$ into a larger topological tree by adding a curve to it that connects $A$ and $B$ but does not intersect $X$ or $Y$. It not hard to see that we also have enough freedom to choose this curve such that in the resulting family we have \begin{math}\o(XAB)=\o(XYA)\end{math}. Since \begin{math}\o(XYA)\ne\o(XYB)\end{math}, without loss of generality, \begin{math}\o(XAB)=\o(XYA)=\o(XBY)\end{math}, i.e., \begin{math}X\in conv(ABY)\end{math}.
    From \ref{item:hairpartition}, \begin{math}X[A,B,Y]\setminus \{A\cp X, B\cp X, Y\cp X\}\subset \hollow(ABY)\end{math}, which implies \begin{math}X\cp Y=X[A,B] \cp Y[A,B]\end{math}.
    
    To see the second part, assume on the contrary that \begin{math}X\cp Y=X[A\mbox{-}B]\cp Y[A\mbox{-}B]\end{math} is a crossing point of $X$ and $Y$. Then \begin{math}\o(XYA)=\o(XYB)\end{math}, a contradiction. See Figure \ref{fig:halfline}(a).
\end{proof}

\begin{lem}\label{lem:halfline}
    Suppose that we are given two halflines, $A$ and $B$, and a realization of a $\TrTO$ with a family of topological trees \cP. If every pair \begin{math}X,Y\in \cP\end{math} satisfies the conditions of Proposition \ref{prop:halfline} with $A$ and $B$, then the order type of some \begin{math}\cQ\subset \cP\end{math}, \begin{math}|\cQ|\ge |\cP|/2\end{math} has a realization with $T$-shapes, i.e., is a \TTO.
\end{lem}
\begin{proof}
    We can assume that $A$ and $B$ are vertical halflines, and that \begin{math}X=X[A,B]\end{math} for all \begin{math}X\in\cP\end{math}, as other parts of $X$ do not participate in any intersection. By Proposition \ref{prop:halfline} \begin{math}X[A\mbox{-}B]\end{math} and \begin{math}Y[A\mbox{-}B]\end{math} are pairwise non-crossing. After a local redrawing around the touching points, we can also assume that there are no touchings. Thus, we can assume that each \begin{math}X[A\mbox{-}B]\end{math} is horizontal and each $X$ is contained inside the strip bounded by the lines of these halflines.
    We can also assume that $A$ is downward infinite. $B$ is either also downward infinite, or upward infinite. As each \begin{math}X\in \cP\end{math} is inside the strip, we can cover both cases by simply assuming that both $A$ and $B$ are lines bounding the strip.
    We can order the elements of $\cP$ by the heights of \begin{math}X[A\mbox{-}B]\end{math}, i.e., define $X<Y$ if the $y$-coordinate of \begin{math}A\cp X\end{math} (and \begin{math}B\cp X\end{math}) is less than the $y$-coordinate of \begin{math}A\cp Y\end{math} (and \begin{math}B\cp Y\end{math}).
    
    For simplicity, denote \begin{math}X[A\mbox{-}B]\end{math} by $X^-$ and the rest of $X$ by \begin{math}X^|=X\setminus X[A\mbox{-}B]\end{math}.
    Note that $X^|$ need not be connected at all. 
    If \begin{math}X<Y<Z\end{math}, then \begin{math}Y^|\cp X^-=\emptyset\end{math} or \begin{math}Y^|\cp Z^-=\emptyset\end{math}. Indeed, otherwise \begin{math}Y\cp X=Y^|\cp X^-\end{math} and \begin{math}Y\cp Z=Y^|\cp Z^-\end{math} and then $X$ and $Z$ cannot intersect since $Y^-$ separates $X^|$ from $Z$ and separates $Z^|$ from $X$, a contradiction. See Figure \ref{fig:halfline}(b).
    Thus we can divide $\cP$ into two groups: 
    $$
    \cP^\perp=\{Y\in\cP: \textit{for }\forall X\in \cP\textit{ such that } X<Y
    \textit{ we have } X^-\cp Y^|=\emptyset\},
    $$
    \begin{displaymath}\cP^\top=\{Y\in \cP: \textit{for }  \forall Z\in \cP\textit{ such that } Z>Y\textit{ we have } Z^-\cp Y^|=\emptyset\}.\end{displaymath}
    Without loss of generality, assume that $|\cP^\top|\ge |\cP|/2$.
    
    Let the lowest member of $\cP^\top$ be $X_0$, the second lowest be $X_1$ and  \begin{math}\cP_0^\top= \cP^\top\setminus\{X_0\}\end{math}. As \begin{math}X_1^-\cp X_0^|=\emptyset\end{math}, $X_0$ remains completely below $X_1$ in the strip and thus for the rest of the trees in \begin{math}\cP_0^\top\end{math} to intersect $X_0$, they all have to go below $X_1^-$, i.e., for every \begin{math}X\in \cP_0^\top\end{math}, \begin{math}X\ne X_1\end{math}, we have that $X^|$ intersects $X_1^-$. Moreover, for \begin{math}X,Y\in \cP_0^\top\end{math}, \begin{math}X^-\cp Y^|\end{math} if and only if $X<Y$. For each \begin{math}X\in \cP_0^\top\end{math}, delete from it all parts that are not in $X^-$ or in the shortest path connecting \begin{math}X^|\cp X_0\end{math} with $X^-$. Notice that in this drawing for every \begin{math}X\in \cP_0^\top\end{math}, $X^|$ is a curve with one endpoint on $X^-$ and its part below $X_1^-$ is disjoint from all other trees in \begin{math}\cP_0^\top\end{math} and ends in \begin{math}X\cp X_0\end{math}. Thus $X_0$ can be easily redrawn to be a curve going close to the original tree, connecting \begin{math}X_0 \cp A\end{math} with \begin{math}X_0 \cp B\end{math} and touching each \begin{math}X\in \cP_0^\top\end{math} in the same order (and in the same point) as before. See Figure \ref{fig:halfline}(c) for the drawing at this stage, where the original $X_0$ is shown dotted.
    
    Now the trees in \begin{math}\cP^\top\end{math} can be easily redrawn as T-shapes, where $X_0$ becomes a horizontal segment and for an \begin{math}X\in \cP_0^\top\end{math} the hat of the respective T-shape is $X^-$, while the leg is a redrawing of the curve $X^|$ as a vertical segment. 
\end{proof}

\begin{lem}\label{lem:alphaordertype}
    For any finite point set $\cP$ and for every $\alpha>0$ there is a point set $\cP_\alpha$ such that for every $\cQ\subset \cP_\alpha$ for which \begin{math}|\cQ|\ge \alpha |\cP_\alpha|\end{math}, there is a \begin{math}\cQ'\subset \cQ\end{math} such that \begin{math}\o(\cQ')=\o(\cP)\end{math}, i.e., $\cP$ and $\cQ'$ have the same order type.
\end{lem}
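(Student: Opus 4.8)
The plan is to build $\cP_\alpha$ as an \emph{iterated blow-up} (a cluster of clusters of $\dots$ of points) of a fixed realization of $\o(\cP)$, and then to locate a copy of $\cP$ inside any dense subset by a pigeonhole argument on the blow-up tree. First I would dispose of the trivial case $|\cP|\le 2$; otherwise set $k:=|\cP|\ge 3$ and fix points $p_1,\dots,p_k$ in general position realizing $\o(\cP)$. Since an order type is determined by the strict signs of finitely many $3\times 3$ determinants, there is an $\varepsilon>0$ such that moving each $p_i$ within distance $\varepsilon$ preserves $\o(\cP)$; this perturbation robustness is the only geometric input the proof needs.

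Next I would define point sets $\cP^{(0)},\cP^{(1)},\dots$ recursively: $\cP^{(0)}$ is a single point, and $\cP^{(j)}$ is obtained from $p_1,\dots,p_k$ by replacing each $p_i$ with a tiny translated, scaled-down copy of $\cP^{(j-1)}$ placed in a disk $D_i$ centered at $p_i$. The disks are chosen small enough, at every level, so that for \emph{every} internal node $v$ of the resulting depth-$j$ rooted $k$-ary tree the following holds: choosing one leaf from each of the $k$ subtrees hanging off the children of $v$ produces a $k$-point set of order type $\o(\cP)$. (At node $v$ this only demands that the $k$ disks there have radius below $\varepsilon$ times the local scale, so it is arrangeable.) Note $|\cP^{(j)}|=k^j$. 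I would then pick $t$ with $\bigl((k-1)/k\bigr)^t<\alpha$ and set $\cP_\alpha:=\cP^{(t)}$, so that $N:=|\cP_\alpha|=k^t$.

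Now take any $\cQ\subseteq\cP_\alpha$ with $|\cQ|\ge\alpha N$. The key claim is that some internal node $v$ of the blow-up tree meets $\cQ$ in each of its $k$ child-subtrees; granting it, picking one element of $\cQ$ from each child-subtree of $v$ gives $\cQ'\subseteq\cQ$ with $\o(\cQ')=\o(\cP)$ by the construction, which is exactly what we want. To prove the claim I would argue by contradiction: if every internal node had a child-subtree disjoint from $\cQ$, then an easy induction on the height $h$ of a node shows that at most $(k-1)^h$ of the leaves below it lie in $\cQ$ (for $h=0$ it is $\le 1$, and for $h\ge 1$ at most $k-1$ children contribute, each at most $(k-1)^{h-1}$). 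Applied at the root this yields $|\cQ|\le (k-1)^t<\alpha k^t=\alpha N$, contradicting $|\cQ|\ge\alpha N$.

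I expect the only genuinely delicate point to be the bookkeeping for the ``simultaneously small enough disks'' in the recursion, i.e. verifying that the robustness condition at each of the $t$ levels can be met at once. But since $t$ is a fixed constant depending only on $k$ and $\alpha$, and each level imposes a single uniform smallness condition on its disks, this is routine; the combinatorial heart is just the pigeonhole on the $k$-ary tree.
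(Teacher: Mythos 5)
Your proposal is correct and follows essentially the same route as the paper's own proof: a recursive blow-up of a fixed realization of $\o(\cP)$ (clusters of clusters, with each level small enough that any transversal choice reproduces $\o(\cP)$), followed by the observation that a subset avoiding a copy of $\o(\cP)$ must miss a whole cluster at every level, giving the bound $\bigl((k-1)/k\bigr)^t|\cP_\alpha|<\alpha|\cP_\alpha|$ for $t$ large. Your tree-pigeonhole phrasing and the paper's per-level avoidance count are the same argument, with the same implicit use of general position to get the perturbation robustness.
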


\begin{cor}\label{cor:alphaordertype}    
    Let \cXTO be any family of total 3-orders (i.e., a subfamily of \cTO) such that \begin{math}\cXTO\subsetneq \cpTO\end{math}, then for every \begin{math}\alpha>0\end{math} there is a point set $\cP_\alpha$ such that for every \begin{math}\cQ\subset \cP_\alpha\end{math} for which \begin{math}|\cQ|\ge \alpha |\cP_\alpha|\end{math}, we have that \begin{math}\o(\cQ)\notin\cXTO\end{math}.
\end{cor}

\begin{proof}[of Lemma \ref{lem:alphaordertype}]
    We can assume \begin{math}\alpha<1\end{math}, since the statement holds for \begin{math}\alpha=1\end{math} by setting \begin{math}\cP_\alpha=\cP\end{math} and is vacuous for \begin{math}\alpha>1\end{math}.
    Let \begin{math}n=|\cP|\end{math} and \begin{math}\cP_1=\cP\end{math}.
    Let $\cP_{i+1}$ be the point set obtained from $\cP$ by replacing each \begin{math}p\in\cP\end{math} with a small copy of $\cP_i$ close to $p$, such that for any \begin{math}p,q,r\in \cP\end{math} and points \begin{math}p',q',r'\in \cP_{i+1}\end{math} where \begin{math}p',r',q'\end{math} is close to \begin{math}p,q,r\end{math}, respectively, we have \begin{math}\o(pqr)=\o(p'q'r')\end{math}. Starting with $\cP_1$ we repeat this process \begin{math}k-1\end{math} times, where $k$ is a large enough integer specified later, to get $\cP_k$ with \begin{math}N=n^k\end{math} points. 
    
    Notice that for every \begin{math}2\le i\le k\end{math}, if some subset of $\cP_{i}$ contains a point from each of the $n$ groups of $\cP_{i-1}$'s in $\cP_{i}$, then it contains \begin{math}\o(\cP)\end{math}. 
    
    Assume that $\cQ$ is a subset of $\cP$ that does not contain a subset with order type $\o(\cP)$. We need to show that $\cQ$ must be smaller than \begin{math}\alpha|\cP|\end{math}.
    
    First, by the previous observation $\cQ$ must completely avoid one of the groups of $\cP_{k-1}$ in $\cP_k$.
    Then the rest of the groups are homothetic to $\cP_{k-1}$, and thus in each of them $\cQ$ must completely avoid one of the groups of $\cP_{k-2}$. By a repeated application of this argument, in each step we find that $\cQ$ avoids \begin{math}\frac{1}{n}\end{math}th of the remaining points and thus altogether $\cQ$ can have at most \begin{math}(\frac{n-1}{n})^k\cdot N\end{math} points. Thus \begin{math}\frac{|\cQ|}{|\cP|}\le (\frac{n-1}{n})^k=(1-\frac 1n)^k<e^{-k/n}<\alpha\end{math} if $k$ is large enough, we are done.  
\end{proof}

\begin{rem}
Lemma \ref{lem:alphaordertype} also follows from the powerful Multidimensional Szemer\'edi theorem of \cite{FK}.
Indeed, we can pick $\cP_\alpha$ to be a very large grid, and any $\alpha$-dense subset of $\cP_\alpha$ will contain a grid of any given size (fixed before choosing the size of the large grid), that in turn will contain every order type of points up to some given size.
In fact, the copy $\cQ'$ of $\cP$ can even be chosen to be homothetic to $\cP$ if $\cP$ was a subset of some grid.
However, the size of $\cP_\alpha$ obtained this way will be enormous.
The above is a self-contained proof that gives a much better bound on $|\cP_\alpha|$, and a $\cQ'$ that is almost homothetic to $\cP$ in the sense that it can be the homothet of some arbitrarily slightly perturbed copy of $\cP$ (the perturbation cannot be fixed but the extent of the perturbation can).
\end{rem}

\begin{prop}\label{prop:Y}
    There exists a total 3-order realizable by points which is not realizable by Y-shapes, i.e., \begin{math}\cpTO\not\subset \cYTO\end{math}.
\end{prop}
\begin{proof}
    Fix $\alpha=1/72$.
    The construction $\cP$ will consist of a $\cP_\alpha$ such that for every \begin{math}\cQ\subset \cP_\alpha\end{math} for which \begin{math}|\cQ|\ge \alpha |\cP_\alpha|\end{math}, we have \begin{math}\o(\cQ)\not\in \cTTO\end{math}, and three more points, \begin{math}A,B,C\end{math}, such that \begin{math}\cP_\alpha\subset conv(ABC)\end{math}. This exists by Corollary \ref{cor:Tnotinp} and Corollary \ref{cor:alphaordertype}.
    We can assume by applying a suitable affine transformation (that leaves $\o(\cP_\alpha)$ the same) that $\cP_\alpha$ is flattened so that for any two points \begin{math}X,Y\in\cP_\alpha\end{math} the points $A$ and $B$ lay on different sides of the $XY$ line, i.e., \begin{math}\o(XYA)\ne\o(XYB)\end{math}.
    
    Assume on the contrary that \begin{math}\o(\cP)\end{math} has a realization with Y-shapes.
    By \ref{item:5points} every intersection occurs inside the closure of $\hollow(ABC)$, so we can assume that all Y-shapes are in the closure of $\hollow(ABC)$.
    As each of $A,B,C$ are Y-shapes, their union consists of \begin{math}\partial\hollow(ABC)\end{math} and without loss of generality possibly only some inward growing hairs.
    We cover $A$ by at most three simple curves, called sections (which can overlap): for each endpoint of the Y-shape $A$ the curve that starts at $A\cp B$ and ends in this endpoint is a section. We similarly define at most three sections that cover $B$.    
    Any pair of sections, one of $A$ and the other of $B$, can be simultaneously extended into topological halflines outside \begin{math}\hollow(ABC)\end{math}, such that they are pairwise disjoint, by shortening them slightly at \begin{math}A\cp B\end{math} and extending them at their other endpoints.
    As no Y-shape from $\cP_\alpha$ can contain $A\cp B$, the intersection structure among the Y-shapes and \begin{math}A,B\end{math} remains the same.
    
    Now we partition $\cP_\alpha$ into at most 36 groups depending on which of the three sections of $A$ and $B$ they intersect (\begin{math}3\cdot 3\end{math} options), and from which directions (going along a subcurve of $A$ from its branching point, on which side does the Y-shape touch it and similarly for $B$; \begin{math}2\cdot 2\end{math} options).
    By a slight redrawing, we can assume that no branching point of a Y-shape from $\cP_\alpha$ falls on $A$ or $B$.
    Recall that due to the flattening of $\cP_\alpha$, we have \begin{math}\o(XYA)\ne\o(XYB)\end{math} for every two different \begin{math}X,Y\notin \{A,B\}\end{math}, thus the elements of any group satisfy the conditions of
    Proposition \ref{prop:halfline}. 
    Fix a group that contains at least \begin{math}|\cP_\alpha|/36\end{math} elements, and apply Lemma \ref{lem:halfline} to redraw half of its elements (thus at least \begin{math}|\cP_\alpha|/72\end{math} elements) as T-shapes. On the other hand, by definition of $\cP_\alpha$, no $1/72$ fraction of \begin{math}|\cP_\alpha|\end{math} defines a total 3-order which is a \TTO, a contradiction.
\end{proof}

We note that in the proof of Proposition \ref{prop:Y} we used only that $A$ and $B$ are realized as $Y$-shapes, the rest of the topological trees could be arbitrary.

Using Proposition \ref{prop:Y}, we can prove the following.

\begin{prop}
    There exists a total 3-order realizable by points which is not realizable by topological trees, i.e., $\cpTO\not\subset \cTrTO$.
\end{prop}
\begin{proof}
    We will prove the following.
    If \begin{math}\cP_0=\{p_1,\dots,p_n\}\end{math} is a point set such that \begin{math}\o(\cP_0)\notin \cYTO\end{math}, then there is a point set $\cP_n$ such that \begin{math}\o(\cP_n)\notin \cTrTO\end{math}. Fix first a sequence of numbers \begin{math}N_1,N_2,\dots\end{math} which grows very fast but is otherwise arbitrary. Now denote by $\cP_i$ the point set obtained from $\cP_0$ by  placing $N_j$ points close to the point $p_j$ for all \begin{math}1\le j\le i\end{math} (the points are placed arbitrarily in the close vicinity of $p_j$). In particular, $\cP_n$ has \begin{math}\sum_{i=1}^{n} N_i\end{math} points.
    Our strategy is to prove for each $i$, going from $n$ to $1$, that if \begin{math}\o(\cP_i)\end{math} has a \TrTO representation where the points \begin{math}\{p_{i+1},\dots,p_n\}\end{math} are represented by crossing Y-shapes, then \begin{math}\o(\cP_{i-1})\end{math} has a \TrTO representation where the points \begin{math}\{p_i,\dots,p_n\}\end{math} are represented by crossing Y-shapes.
    From this, it follows that \begin{math}\o(\cP_n)\in \cTrTO\end{math} would imply \begin{math}\o(\cP_0)\in \cYTO\end{math}, contradicting our assumption.
    
    So fix $i$ and let us assume that \begin{math}\o(\cP_i)\end{math} has a \TrTO representation where the points \begin{math}\{p_{i+1},\dots,p_n\}\end{math} are represented by crossing Y-shapes.
    Denote the collection of the $N_i$ trees that correspond to points that are in the vicinity of $p_i$ by $T_i$, and the remaining \begin{math}N_0:=N_1+\dots+N_{i-1}+n-i\end{math} trees that correspond to points that are \emph{not} in the vicinity of $p_i$ by $T_0$.
  
    Now we want to show that we can redraw at least one tree $X$ from $T_i$ as a $Y$-shape, while leaving all the trees from $T_0$ unchanged, so that the set of their induced 3-order, \begin{math}\o(T_0\cup \{X\})\end{math}, remains the same.
    Note that this system does not include any triples that contain at least two trees from $T_i$.
    This means that we can ignore the intersection points between two trees from $T_i$.
    Therefore, we can assume that every leaf of a tree $X$ from $T_i$ is an intersection point with a tree from $T_0$, by pruning parts of $X$, if necessary. While members of $T_i$ might become disjoint, it still holds that the members of \begin{math}T_0\cup \{X\}\end{math} are pairwise intersecting for any \begin{math}X\in T_i\end{math}.
    
    If every tree from $T_i$ has at least four leaves, intersecting a tree of $T_0$ in each of them, then we fix four such leaves for each $T_i$.
    Divide $T_i$ into \begin{math}\binom{N_0}4\end{math} groups depending on which four trees they intersect in their four fixed leaves.
    By Lemma \ref{lem:treetangencies}, no group can have more than $c$ trees (where $c$ is as in the lemma). 
    Thus, if we choose \begin{math}N_i\ge c\binom{N_0}4\end{math}, then we get a contradiction.
    
    Thus, there is a tree from $T_i$ which has at most three leaves, this is a redrawing of the original $T_i$ as a Y-shape crossing every tree from $T_0$, exactly as we wanted.
\end{proof}

As \cTrTO= \cGCTO by Corollary \ref{cor:GCTr}, this finishes the proof of Theorem \ref{thm:main}.

\section{Open problems}\label{sec:discussion}

\begin{prob}
    Determine the smallest number of points which are not representable as a \GCTO, as a \CTO, etc.
\end{prob}

\begin{prob}
    Determine the containment relations among the subclasses of \cTO. Is our diagram in Figure \ref{fig:3TOposet} complete, or are there more containments?
\end{prob}

\begin{prob}[by an anonymous referee.]
    Is there a nice topological representation for every \cPO/\cTO?
\end{prob}

\begin{prob}[by an anonymous referee.]
    How do our subclasses of \cTO compare to the class given by Topological drawings of the complete graph, introduced by \cite{BFSchSchS}?
\end{prob}

\begin{prob}
    Determine the number/growth rate of \cGCTO, \cCTO, etc., on $n$ elements.
\end{prob}

Finally, we would like to pose the following strengthening of the thrackle conjecture.

\begin{conj}
    Suppose that we are given in the plane $n$ points, $\cP$, and $m$ topological trees that pairwise intersect exactly once such that each leaf of each tree is from $\cP$.
    We conjecture that $m\le n$.
\end{conj}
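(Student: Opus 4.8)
The plan is to isolate the genuinely hard core of the conjecture---Conway's thrackle conjecture---and to reduce everything else to it. First I would normalize: we may assume $\cP$ is exactly the union of all leaves of the $m$ trees (extra points of $\cP$ only weaken the statement), and, arguing by contradiction, fix a counterexample with $m>n$ minimizing $m+n$. Two elementary facts set the stage. (i)~In a minimal counterexample no pendant branch of a tree is disjoint from all other trees: pruning such a branch cannot increase $m$, and with a careful choice of branch it keeps a smaller counterexample. (ii)~Two trees share at most one leaf, since a common leaf is already their unique intersection point; hence the leaf-sets form a linear hypergraph and any two trees without a common leaf cross exactly once. It is also worth noting that the conjecture is equivalent to the existence of a system of distinct representatives assigning to each tree one of its own leaves, so by Hall's theorem it is ``self-referential'': one must prove it directly or through a deficiency version.

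The next step is the reduction to curves. If every tree has exactly two leaves, the trees are simple curves pairwise meeting exactly once, i.e.\ the configuration is a thrackle on $\cP$, and the conjecture is precisely Conway's conjecture; one then invokes the known partial results (which hold in particular for bipartite graphs, with the general bound currently around $1.43\,n$), or tries to push them further. So the whole additional difficulty lies in trees with at least three leaves. Such a tree $T$ already ``pays for itself'', contributing at least three points to $n$, so morally it should be disposable: the natural attempt is to pick a \emph{leaf-most} branching point $w$ of $T$ (all but one of its branches being pendant edges) and either replace $T$ by a single curve running close to two of those pendant edges, or split $T$ at $w$ and discard a piece, in each case trying to decrease $m$ by at least as much as $n$.

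The main obstacle is exactly this reduction. The branching structure of $T$ is in general essential: the intersection points of $T$ with the other $m-1$ trees are spread over its different branches, so no curve close to $T$, and no proper subtree of $T$, can still meet all the other trees exactly once; and pruning $T$ all the way to a curve may delete points of $\cP$, lowering $n$ while leaving $m$ fixed, which destroys the induction. Thus the conjecture is at least as hard as Conway's thrackle conjecture, plus a ``trees can be traded for their leaves'' principle that we do not know how to make rigorous. A more modest, and probably reachable, target along the same lines is a linear bound $m=O(n)$: planarize the whole configuration and apply Euler's formula, using that no three trees share a point, that two trees share at most one leaf, and that in a minimal counterexample every pendant branch carries an intersection, and control the number of tangencies via the forbidden $K_{t+3,c}$ of Lemma~\ref{lem:treetangencies}; carrying this out carefully should already give $m\le Cn$ for an explicit constant, even if the sharp bound $m\le n$ seems to need new ideas.
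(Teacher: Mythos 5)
The statement you are trying to prove is posed in the paper as an \emph{open conjecture}---explicitly introduced as a strengthening of Conway's thrackle conjecture---and the paper offers no proof of it, only the remark that the special case where every tree is a single edge is exactly the thrackle conjecture. So there is no proof in the paper to compare against, and your proposal does not supply one either; indeed you identify the two missing ingredients yourself. First, even in the base case where every tree has two leaves, you only ``invoke the known partial results'' towards the thrackle conjecture; these give $m\le cn$ for a constant $c>1$ (currently around $1.4$ in general), not the bound $m\le n$ that the statement asserts, so the base case of your induction is itself the open thrackle conjecture. Second, the reduction of trees with at least three leaves to curves---replacing a tree by a nearby curve, or pruning at a leaf-most branching point, while keeping every pairwise intersection exactly one and without destroying the count---is precisely the step you concede you cannot make rigorous, and it is where the genuine difficulty of the stronger conjecture lies: the intersections of a tree with the other trees may be spread over all of its branches, so no subtree or nearby curve still meets everything exactly once, and pruning may delete leaves and hence points of $\cP$, which breaks the minimal-counterexample argument.

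Two smaller points. In your step (ii), two trees that intersect exactly once need not \emph{cross}; the unique intersection point can be a tangency, so the claim ``any two trees without a common leaf cross exactly once'' is not justified (and the distinction matters for any planarity or parity argument you might later want). And the suggested fallback $m=O(n)$ via Euler's formula together with Lemma~\ref{lem:treetangencies} is only a sketch: with $t=1$ the lemma forbids $K_{4,c}$ in the tangency graph, which bounds the number of tangencies by roughly $m^{2-1/4}$ but says nothing by itself about $m$ in terms of $n$; converting this into a linear bound would require an argument you have not given. In short, what you have written is a reasonable research plan that correctly locates the hard core of the problem, but it is not a proof, and the statement remains open in the paper as well.
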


Note that if each tree consists of a single edge, then we get back the original thrackle conjecture. We could weaken our conjecture by requiring that the branching points of the trees also need to be from $\cP$. Another interesting, and probably easier, special case is when we also require that each edge of each tree needs to be a segment between two points of $\cP$.

\acknowledgements

We would like to thank N\'ora Frankl and M\'arton Nasz\'odi for discussions during the early stage of the project and a reviewer for their useful suggestions.

\nocite{*}
\bibliographystyle{abbrvnat}
\bibliography{gc_dmtcs_final}
\label{sec:biblio}

\end{document}